\newtheorem{theorem}{Theorem}[section]
\newtheorem{proposition}[theorem]{Proposition}
\newtheorem{corollary}[theorem]{Corollary}
\newtheorem{lemma}[theorem]{Lemma}
\newtheorem{definition}[theorem]{Definition}
\newtheorem{remark}[theorem]{Remark}
\theoremstyle{plain}
\newtheorem*{theorem*}{Theorem}
\def\dashint{\,\ThisStyle{\ensurestackMath{%
			\stackinset{c}{.2\LMpt}{c}{.5\LMpt}{\SavedStyle-}{\SavedStyle\phantom{\int}}}%
		\setbox0=\hbox{$\SavedStyle\int\,$}\kern-\wd0}\int}
\newcommand{\rn}{\mathbb{R}^n}
\newcommand{\bmorn}{{\text{BMO}}(\rn)}
\newcommand{\lonern}{{{L}}^1(\mathbb{R}^n)}
\newcommand{\lonernn}[1]{\|#1\|_{\lonern}}
\newcommand{\lprn}{{{L}}^p(\mathbb{R}^n)}
\newcommand{\linftyrn}{{{L}}^{\infty}(\mathbb{R}^n)}
\newcommand{\linftyrnn}[1]{\|#1\|_{\linftyrn}}
\newcommand{\lonernloc}{{{L}}_{{\text{loc}}}^1(\mathbb{R}^n)}
\newcommand{\ave}[2]{\dashint_{#2}{#1}}
\newcommand{\osc}[2]{\text{osc}(#1,#2)}
\newcommand{\jnp}[1]{\|#1\|_{{\text{JN}}_p(\rn)}}
\newcommand{\jnprn}{{\text{JN}}_p(\rn)}
\newcommand{\upperhalf}{\mathbb{R}_+^{n+1}}
\begin{document}
	
	\title{A view from above on $\jnprn$}
	\author{Shahaboddin Shaabani}
	\address{Department of Mathematics and Statistics\\Concordia University}
	\email{shahaboddin.shaabani@concordia.ca}
	
\begin{abstract}
For a symmetric convex body $K\subset\rn$ and $1\le p<\infty$, we define the space $S^p(K)$ to be the tent generalization of $\jnprn$, i.e., the space of all continuous functions $f$ on the upper-half space $\upperhalf$ such that
\[
\|f\|_{S^p(K)} := \big( \sup_{\mathcal{C}} \sum_{B \in \mathcal{C}} |f_B|^p \big)^{\frac{1}{p}} < \infty,
\]
where, in the above, the supremum is taken over all finite disjoint collections of homothetic copies of $K$. It is then shown that the dual of $S^1_0(K)$, the closure of the space of continuous functions with compact support in $S^1(K)$, consists of all Radon measures on $\upperhalf$ with uniformly bounded total variation on cones with base $K$ and vertex in $\rn$. In addition, a similar scale of spaces is defined in the dyadic setting, and for $1\le p<\infty$, a complete characterization of their duals is given. We apply our results to study dyadic $\text{JN}_p$ spaces.

\end{abstract}	
\keywords{$\text{JN}_p$ space, Tent spaces, Intersection graphs }
\subjclass{42B35 }
\date{}
\maketitle
\section{Introduction}	
	
For $1<p<\infty$, the space $\jnprn$ consists of all functions $f\in \lonernloc$ such that  
\begin{equation}\label{jnpdef}		
	\jnp{f}:=\big(\sup_{\mathcal{C}}\sum_{Q\in\mathcal{C}}\osc{f}{Q}^p|Q|\big)^{\frac{1}{p}}<\infty,
\end{equation}
where, in the above, $Q$ denotes a closed cube with sides parallel to the axes, $\mathcal{C}$ is a finite collection of disjoint closed cubes, and $\osc{f}{Q}$ is the mean oscillation of $f$ over $Q$, i.e.,  
\[
\osc{f}{Q}:=\ave{|f-f_Q|}{Q}, \quad f_Q:=\ave{f}{Q}.
\]
Throughout the paper, we use $|E|$ for the Lebesgue measure of a measurable set $E$, and $\ave{f}{E}:=\frac{1}{|E|}\int_{E}f$ for the average of $f$ over $E$.\\

Alongside the well-known $\text{BMO}$-norm, the above cumbersome norm was introduced by F. John and L. Nirenberg in \cite{MR0131498}. It is not hard to see that $\lprn$ is embedded (continuously) in $\jnprn$. Furthermore, in \cite{MR0131498}, after the authors proved their famous lemma, it was shown that $\jnprn$ is embedded in ${L}^{p,\infty}(\rn)$, the weak-$\lprn$. In addition, the above norm tends to the $\text{BMO}$-norm as $p$ goes to infinity. Therefore, it is tempting to think that $\jnprn$ could play the role of $\bmorn$, $\lprn$ the role of $\linftyrn$, and the embedding $\jnprn\subset {L}^{p,\infty}(\rn)$ could be a variant of the John-Nirenberg inequality. However, this analogy has not gone any further than this since, despite the crucial role of $\bmorn$ in harmonic analysis, up until now, only a few applications of $\jnprn$ have been known.\\

Nevertheless, in recent years, there has been an interest in understanding various aspects of this space, and many papers have been written in this direction \cite{MR3806819,MR4686687,MR4546791,MR4323004,MR4480231,MR2794938}. However, it is not easy at all to work with this norm, and constructing functions in $\jnprn\backslash\lprn$ has proved to be difficult \cite{MR3806819}. One source of this difficulty lies in the appearance of sums over disjoint collections, and the other stems from the mean oscillation. In fact, changing the latter factor can drastically change the meaning of the norm. For instance, replacing the mean oscillation with the average over cubes results in the $\lprn$ norm \cite{MR3806819}. Indeed, there are various norms appearing in approximation theory that have definitions similar to \eqref{jnpdef} but use quantities different from the mean oscillation \cite{MR4078209,MR4040115}. In addition, there are norms using other convex bodies, such as balls, which are defined in a similar way \cite{MR2794938,MR4376642}.\\

The purpose of the present paper is to study those properties of such norms that are shared for every choice of these quantities and depend only on the structure of these norms. To this aim, we appeal to the idea of tent spaces, which were introduced by R. Coifman, Y. Meyer, and E. Stein. In \cite{MR791851}, the authors showed that some phenomena, such as ``${H}^1-\text{BMO}$'' duality or atomic decomposition, hold in a very general setting and, to a large extent, are consequences of the geometric structure of the norms. In \cite{MR791851}, tent spaces and many of their properties, including their duals, were studied, but a scale of such spaces, denoted there by $T_p^{\infty}(\rn)$, was considered to be ill-defined and therefore was almost excluded from their study (see the remark in Section 3 in \cite{MR791851}). The reason for this lies in the authors' interest in the structure of the $\text{BMO}$-norm, but $T_p^{\infty}(\rn)$ is a generalization of the space of functions with a bounded square function and not $\bmorn$. Therefore, in \cite{MR791851}, their definition was modified, and the scale of $T_p^{\infty}(\rn)$ spaces was abandoned (see remark (a) of Section 6 in \cite{MR791851}). Here, we will show that there is a close connection between these spaces and $\jnprn$.\\

Here, we fix a symmetric convex body $K\subset\rn$ and, for $1\le p<\infty$, define a scale of spaces denoted by $S^p(K)$, which are ``tent generalizations'' of $\text{JN}_p$-type spaces. Then, in Theorem \ref{duality1}, we show that the predual of the space $T_1^\infty(\rn)$, which we modify and denote by $T^1(K)$, is $S_0^1(K)$, the closure of the space of continuous functions with compact support in $S^1(K)$. At the heart of this duality lies the problem of partitioning a family of homothetic copies of $K$ with some finite total overlap into disjoint subfamilies whose number is bounded by the total overlap of the family. Fortunately, such problems have been studied in combinatorial convex geometry, and here we apply a simple result in this direction \cite{MR125491,MR2065259,MR2097318,MR2807079,MR2971748}. The only place in harmonic analysis where we have seen such a problem is in the proof of the covering theorem of Besicovitch \cite{MR1249875,MR3409135}, and Theorem \ref{duality1} shows that such problems are related to the study of $\text{JN}_p$-type norms. It is unfortunate that for $1<p<\infty$, we could not successfully characterize the dual of the space $S_0^p(K)$, and in Theorem \ref{preduals}, we show that the predual of $T^p(K)$ is totally different from $S_0^p(K)$. However, in the dyadic setting, we give a complete characterization of the dual of $S_0^p(K)$-type spaces and show that even in this simple setting, the dual norms are complicated (see Theorem \ref{xdual}).

\section{The Tent Generalization of $\jnprn$}
To begin with, let us fix some notation. Throughout the paper, we fix a symmetric convex body $K\subset \rn$, which is a compact, convex set with nonempty interior and is symmetric about the origin, i.e., when $x\in K$, we have $-x\in K$. For such a set, we denote the associated norm by $\|.\|_{K}$ or, when it is clear from the context, by $\|.\|$, and the closed ball (with respect to this norm) centered at $x\in\rn$ with radius $t$ by $B_K(x,t)$ or $B(x,t)$. These balls will be referred to as $K$-balls, and we use $B$ to denote such a ball when its center and radius are not of interest to us. We also use $\upperhalf$ for the upper half-space of $\rn$, i.e., $\rn\times \mathbb{R}_+$, and for a symmetric convex body $K$ and $x\in\rn$, $\Gamma_x$ is used for the cone with $K$ as its base and $x$ as its vertex, that is,
\[
\Gamma_x:=\left\{(y,t)\in\upperhalf:\|x-y\|_K\le t \right\}.
\]
In addition, ${C}_b(\upperhalf)$ and ${C}_c(\upperhalf)$ are used for the spaces of bounded continuous and compactly supported continuous functions on the upper half-space, respectively. Also, since we regard a point $(x,t)\in\upperhalf$ as the $K$-ball, $B=B(x,t)$, for a function $f\in C_b(\upperhalf)$, we sometimes use $f_B$ instead of $f(x,t)$. Finally, we use the standard notation $\lesssim$, $\gtrsim$, and $\simeq$ to suppress all dimensional constants that might appear in inequalities. Having fixed some notation, we give the tent space generalization of the norm defined in \eqref{jnpdef}.
\begin{definition}
	For a symmetric convex body $K$ and $1\le p<\infty$, the space ${S}^p(K)$ consists of all continuous functions on $\upperhalf$ such that
	\begin{equation}
		\|f\|_{S^p(K)} := \big( \sup_{\mathcal{C}} \sum_{B \in \mathcal{C}} |f_B|^p \big)^{\frac{1}{p}} < \infty,
	\end{equation}
	where the supremum is taken over all finite collections of pairwise disjoint $K$-balls.
\end{definition}
The above space is embedded in $C_b(\upperhalf)$ and is complete. Indeed, given a Cauchy sequence $\left\{f_m\right\}$ in $S^p(K)$, we have that it is uniformly convergent to a function $f$, and thus for a given collection of pairwise disjoint $K$-balls, $\mathcal{C}$, we have
\[
\sum_{B \in \mathcal{C}} |f_m - f|_B^p = \lim_{l \to \infty} \sum_{B \in \mathcal{C}} |f_m - f_l|_B^p \le \liminf_{l \to \infty} \|f_m - f_l\|_{S^p(K)}^p,
\]
which implies that $\lim_{m \to \infty} \|f_m - f\|_{S^p(K)} = 0$, which is the desired result.\\
The space $S^p(K)$ contains $C_c(\upperhalf)$ and functions of the form
\[
f(x,t) = |B(x,t)|^{\frac{1}{p}} \ave{g}{B(x,t)}, \quad g \in \lprn,
\]
which can be verified by Hölder's inequality. In fact, the above formula gives an isometric embedding of $\lprn$ into $S^p(K)$. A more interesting example comes from functions in $\jnprn$. That is, for $1 < p < \infty$, and a function $g \in \jnprn$, by choosing $Q_0 = [-1,1]^n$ and
\[
f(x,t) = \osc{g}{B_{Q_0}(x,t)} |B_{Q_0}(x,t)|^{\frac{1}{p}},
\]
we have that $\jnp{g} = \|f\|_{S^p(Q_0)}$.\\

Later, we deal with the space $S^1_0(K)$, the closure of $C_c(\upperhalf)$ in $S^1(K)$. Unfortunately, we could not find a simple characterization of this space. However, the next proposition provides some non-trivial examples of functions belonging to $S^1_0(K)$.

\begin{proposition}\label{stripproposition}
	Every function $f \in S^1(K)$ with support contained in a strip of the form $\rn \times [a,b] \subset \upperhalf$ belongs to $S^1_0(K)$.
\end{proposition}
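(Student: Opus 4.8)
The plan is to approximate $f$ by spatial truncations. Since $\mathrm{supp}\,f\subset\rn\times[a,b]$ with $0<a\le b$, every $K$-ball $B=B(x,t)$ that can contribute to a sum already has $f_B=f(x,t)\neq0$, hence $t\in[a,b]$, so its radius is bounded by $b$; the only obstruction to compact support is the possible unboundedness of $\mathrm{supp}\,f$ in the $x$-direction. Accordingly, fix a continuous cutoff $\phi_R$ on $\rn$ with $0\le\phi_R\le1$, $\phi_R\equiv1$ on $\{\|x\|_K\le R\}$ and $\phi_R\equiv0$ on $\{\|x\|_K\ge 2R\}$, and set $f_R(x,t):=\phi_R(x)f(x,t)$. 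Then $f_R$ is continuous with support in the compact set $\{\|x\|_K\le 2R\}\times[a,b]\subset\upperhalf$, so $f_R\in C_c(\upperhalf)$, and it suffices to show $\|f-f_R\|_{S^1(K)}\to0$ as $R\to\infty$.

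To this end I would introduce the tail quantity
\[
T_R:=\sup_{\mathcal{C}}\ \sum_{B\in\mathcal{C}}|f_B|,
\]
where the supremum is over finite collections $\mathcal{C}$ of pairwise disjoint $K$-balls whose centers satisfy $\|x_B\|_K\ge R$. Because $f-f_R$ vanishes on $\{\|x\|_K\le R\}$ and $|f-f_R|\le|f|$ pointwise, for any finite disjoint collection $\mathcal{C}$ the contributing balls all have $\|x_B\|_K>R$ and form a collection admissible for $T_R$; hence $\sum_{B\in\mathcal{C}}|(f-f_R)_B|\le T_R$, and therefore $\|f-f_R\|_{S^1(K)}\le T_R$. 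Since $T_R$ is nonincreasing in $R$ and bounded by $\|f\|_{S^1(K)}$, it has a limit $L\ge0$, and the proposition reduces to proving $L=0$.

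The crux, and the step I expect to be the main obstacle, is ruling out $L>0$ by a spatial-separation argument that exploits the uniform bound $t\le b$ on the radii. Assuming $L>0$, for every $R$ one can select a finite disjoint collection $\mathcal{C}_R$ consisting only of balls $B$ with $f_B\neq0$ (so $t_B\in[a,b]$) and centers $\|x_B\|_K\ge R$, with $\sum_{B\in\mathcal{C}_R}|f_B|>L/2$; being finite, its centers are also bounded, say $\|x_B\|_K\le R^{*}$. Building a sequence $R_1<R_2<\cdots$ with $R_{k+1}:=R_k^{*}+2b+1$, the families $\mathcal{C}_{R_k}$ become pairwise ``spatially separated'': two balls drawn from distinct $\mathcal{C}_{R_k},\mathcal{C}_{R_l}$ have centers separated by more than $2b\ge t_B+t_{B'}$ in the $K$-norm, hence are disjoint, while balls within a single $\mathcal{C}_{R_k}$ are disjoint by construction. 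Consequently $\bigcup_{k=1}^{N}\mathcal{C}_{R_k}$ is an admissible disjoint collection with $\sum_{B}|f_B|>NL/2$, which exceeds $\|f\|_{S^1(K)}$ once $N$ is large enough---a contradiction. Thus $L=0$, so $\|f-f_R\|_{S^1(K)}\to0$ and $f\in S^1_0(K)$. The delicate point to verify carefully is precisely this amalgamation: one must confirm that the strip hypothesis, by forcing all relevant radii to lie in $[a,b]$, permits finitely many disjoint families living at successively larger spatial scales to be merged into one disjoint family whose $S^1$-sum grows without bound.
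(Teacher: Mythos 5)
Your proof is correct and takes essentially the same route as the paper's: a continuous spatial cutoff (legitimate because the strip hypothesis forces all contributing radii to lie in $[a,b]$), combined with a tail-smallness claim proved by amalgamating infinitely many well-separated finite disjoint families into a single disjoint family, contradicting $\|f\|_{S^1(K)}<\infty$. The only cosmetic difference is that the paper states its tail estimate for balls lying entirely outside $B(0,M)$ (so its amalgamation needs no strip hypothesis, which enters only in the cutoff step), whereas you phrase the tail in terms of centers and use the bound $t_B\le b$ inside the separation argument.
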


\begin{proof}
Here, the crucial observation is that for every $\varepsilon > 0$, if $M$ is chosen large enough, we must have
\begin{equation}\label{smalout}
	\sup_{\cup \mathcal{C} \subset B(0,M)^c} \sum_{B \in \mathcal{C}} |f_B|^p \le \varepsilon,
\end{equation}
where the supremum is taken over all disjoint families of $K$-balls lying outside of $B(0,M)$. Indeed, if this were not the case, we could choose a sequence of disjoint collections $\{\mathcal{C}_i\}_{i \ge 1}$ of balls, far from each other, such that
\[
\sum_{B \in \mathcal{C}_i} |f_B|^p > \varepsilon, \quad i \ge 1,
\]
and their union $\cup \mathcal{C}_i$ would also be disjoint — a contradiction to the fact that $f \in S^p(K)$. 

Now, choose $M > b$ sufficiently large so that \eqref{smalout} holds. Then, take a continuous function $\varphi \in C_c(\upperhalf)$ with $0 \le \varphi \le 1$ such that
\[
\varphi(x,t) = 1, \quad (x,t) \in B(0,2M) \times [a,b].
\]
Note that the function $(1-\varphi)f$ is supported in $B(0,2M)^c \times [a,b]$. Therefore, if $(1-\varphi)f(x,t) \neq 0$, we must have $B(x,t) \subset B(0,M)^c$, which, together with \eqref{smalout}, implies
\[
\|(1-\varphi)f\|_{S^p(K)} \le \varepsilon^{\frac{1}{p}}.
\]
This proves that $f$ belongs to $S^1_0(K)$.
\end{proof}
Another observation worth mentioning is that for $1 \le p < \infty$, the non-tangential maximal operator
\[
M_p(f)(x) := \sup_{(y,s) \in \Gamma_x} \frac{|f(y,s)|}{|B(y,s)|^{\frac{1}{p}}},
\]
maps $S^p(K)$ into $L^{p,\infty}(\mathbb{R}^n)$; that is,
\[
\|M_p(f)\|_{L^{p,\infty}(\mathbb{R}^n)} \lesssim \|f\|_{S^p(K)}.
\]
To see this, let $\lambda > 0$ and let $F$ be a compact subset of $\{M_p(f) > \lambda\}$. For each $x \in F$, choose a $K$-ball $B(y,s)$ containing $x$ such that
\[
|B(y,s)| \le \frac{1}{\lambda^p} |f(y,s)|^p.
\]
By Vitali's covering lemma, we obtain a disjoint collection of $K$-balls $\mathcal{C}$ such that
\[
|F| \le \frac{3^n}{\lambda^p} \sum_{B(y,s) \in \mathcal{C}} |f(y,s)|^p \le \frac{3^n}{\lambda^p} \|f\|_{S^p(K)}^p,
\]
which proves the claim.\\

We proceed by recalling the definition of tent spaces, denoted by $T_p^{\infty}(\rn)$ in \cite{MR791851}, which we simplify here and denote by $T^p(K)$.

\begin{definition}
	For a symmetric convex body $K$, the space $T^1(K)$ consists of all Radon measures $\mu$ on $\upperhalf$ such that 
	\begin{equation}
		\|\mu\|_{T^1(K)} := \linftyrnn{A_1(\mu)} < \infty, \quad A_1(\mu)(x) := |\mu|({\Gamma_x}).
	\end{equation}
	In a similar way, for $1 < p < \infty$, the space $T^p(K)$ is defined as the space of all locally integrable functions $f$ on $\upperhalf$ such that
	\begin{equation}
		\|f\|_{T^p(K)} := \linftyrnn{A_p(f)} < \infty, \quad A_p(f)(x) := \|f\|_{L^p(\Gamma_x)}.
	\end{equation}
\end{definition}
As promised earlier, in the next theorem, we connect the study of $\text{JN}_p$-type norms to the above tent spaces.

\begin{theorem}\label{duality1}
	For every symmetric convex body $K \subset \rn$, the space $T^1(K)$ is the dual of $S_0^1(K)$, the closure of $C_c(\upperhalf)$ in $S^1(K)$. More precisely, any $\mu \in T^1(K)$ defines a bounded linear functional on $S_0^1(K)$, and its action is given by integration with respect to $\mu$. Moreover, the norm of the functional is comparable to $\|\mu\|_{T^1(K)}$. Conversely, for any continuous linear functional $l$ on $S_0^1(K)$, there exists a unique measure $\mu \in T^1(K)$ such that
	\[
	l(f) = \int_{\upperhalf} f \, d\mu, \quad f \in S_0^1(K), \quad  \|l\|_{S_0^1(K)^*} \simeq \|\mu\|_{T^1(K)}.
	\]
	The above integral is absolutely convergent, and all implied constants depend only on the dimension.
\end{theorem}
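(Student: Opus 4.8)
The plan is to prove the two halves separately: that integration against a measure $\mu\in T^1(K)$ is a bounded functional on $S_0^1(K)$ with norm $\lesssim\|\mu\|_{T^1(K)}$, and conversely that every bounded functional arises this way from a unique $\mu\in T^1(K)$. I would begin with the forward bound
\[
\Big|\int_{\upperhalf} f\,d\mu\Big|\le\int_{\upperhalf}|f|\,d|\mu|\le C(n)\,\|\mu\|_{T^1(K)}\,\|f\|_{S^1(K)},\qquad f\in C_c(\upperhalf),
\]
and then extend by density. It is worth recording first why the naive route fails: the Fubini identity
\[
\int_{\upperhalf}|f|\,d|\mu|=\int_{\rn}\Big(\int_{\Gamma_x}\tfrac{|f(y,t)|}{|B(y,t)|}\,d|\mu|(y,t)\Big)dx
\]
only yields the bound $\|\mu\|_{T^1(K)}\,\|M_1(f)\|_{L^1}$, and since $M_1(f)$ lies merely in weak $L^1$ this is far too lossy; the disjointness hidden in the definition of $\|f\|_{S^1(K)}$ must be exploited directly. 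So instead I would reduce to $f\ge 0$, approximate $\int f\,d|\mu|$ by finite sums $\sum_j f(z_j)\,|\mu|(R_j)$ over a fine partition $\{R_j\}$ of a neighborhood of $\mathrm{supp}\,f$, and regard this as pairing $f$ with the discrete measure $\sum_j|\mu|(R_j)\,\delta_{z_j}$. For a point $x$, the weighted cone-overlap $\sum_{j:\,x\in B(z_j)}|\mu|(R_j)$ equals $|\mu|\big(\bigcup_{j:\,z_j\in\Gamma_x}R_j\big)$, which is comparable to $\|\mu\|_{T^1(K)}$; the decisive step is then the combinatorial partition result for homothets of $K$ drawn from the cited convex geometry, namely that a weighted family of $K$-balls of weighted overlap $\le m$ splits into $\lesssim m$ pairwise disjoint subfamilies. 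Each disjoint subfamily contributes at most $\|f\|_{S^1(K)}$, so summing over the $\lesssim\|\mu\|_{T^1(K)}$ subfamilies and letting the mesh tend to $0$ gives the claim.

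For the converse I would use the Riesz--Markov theorem together with a packing observation. If $f\in C_c(\upperhalf)$ is supported in a compact $E\subset\upperhalf$, then $E$ sits in a slab with $t$ bounded below by some $a>0$ and centers in a bounded region, so any pairwise disjoint family of $K$-balls $(y,t)\in E$ has at most a fixed number $N_E$ of members by a volume count; hence $\|f\|_{S^1(K)}\le N_E\|f\|_\infty$. Consequently any $l\in S_0^1(K)^*$ satisfies $|l(f)|\le\|l\|\,N_E\|f\|_\infty$ on $C_c(E)$, and Riesz--Markov produces a unique Radon measure $\mu$ of locally finite variation with $l(f)=\int f\,d\mu$ on $C_c(\upperhalf)$.

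To identify $\mu$ with a member of $T^1(K)$ I would use what I regard as the clean structural heart of the converse: if $\mathrm{supp}\,f\subset\Gamma_x$, then every $K$-ball $B=(y,t)$ with $f_B\neq 0$ satisfies $x\in B$, so any two such balls meet at $x$; thus a pairwise disjoint family contains at most one of them and $\|f\|_{S^1(K)}\le\|f\|_\infty$. Testing $l$ against such $f$ with $\|f\|_\infty\le 1$ and taking the supremum over the open cone gives $|\mu|(\Gamma_x)=A_1(\mu)(x)\le C\|l\|$ for every $x$, after a routine regularity step (with a uniform packing bound for disjoint balls whose small dilates share a point, used to pass from the open cone to its closure). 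Hence $\mu\in T^1(K)$ with $\|\mu\|_{T^1(K)}\lesssim\|l\|$. The forward bound then shows $f\mapsto\int f\,d\mu$ is $S^1$-continuous and agrees with $l$ on the dense subspace $C_c(\upperhalf)$, so $l(f)=\int f\,d\mu$ on all of $S_0^1(K)$ with absolutely convergent integral (apply the forward bound to $|f|,|\mu|$); uniqueness is immediate since $C_c(\upperhalf)$ separates Radon measures.

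The hard part, and the only genuinely delicate one, is the forward bound rather than the representation. Concretely, I expect the obstacle to be the combinatorial/discretization step: establishing the coloring estimate for homothetic copies of $K$ (bounding the number of disjoint subfamilies by a dimensional constant times the weighted overlap, which is where the symmetry and convexity of $K$ and the cited results enter), and controlling the weighted cone-overlap of the discretized measure uniformly in $x$ as the mesh shrinks, so that the straddling of $\partial\Gamma_x$ by partition cells does not inflate the constant beyond a dimensional one. The passage from an unweighted coloring to the weighted statement needed here — equivalently, a layer-cake reduction in the masses $|\mu|(R_j)$ — is the technical crux of the whole argument.
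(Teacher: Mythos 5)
Your converse is correct and is genuinely different from the paper's argument. You test $l$ against functions supported in a single cone $\Gamma_x$, using the observation that every $K$-ball $B(y,t)$ with $(y,t)\in\Gamma_x$ contains $x$, so such balls pairwise intersect and hence $\|f\|_{S^1(K)}\le \|f\|_{\infty}$ for $f$ supported in $\Gamma_x$; together with regularity of $|\mu|$ and a packing bound to pass from the open cone to its closure, this gives $A_1(\mu)(x)\lesssim \|l\|$ at \emph{every} $x$. The paper instead tests against $f(x,t)=a(x,t)\int_{B(x,t)}g(y)\,dy$ with $g$ in the unit ball of $L^1(\rn)$, applies Fubini to get $\big|\int_{\Gamma_y}a\,d\mu\big|\le\|l\|$ for a.e.\ $y$, and then needs a separability argument to remove the dependence of the null set on $a$. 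Your route avoids that fuss entirely and yields pointwise rather than a.e.\ bounds; the packing lemma it needs (disjoint balls of radius $\ge a$ that all come within $\delta<a/2$ of $x$ are at most $C(n)$ in number) is indeed routine by convexity.

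The genuine gap is in the forward direction, which you yourself call the crux but do not prove; it consists of exactly the two lemmas the paper has to work for. First, the weighted discrete inequality: you invoke "the combinatorial partition result for homothets" in weighted form, and suggest it follows from the unweighted coloring by "a layer-cake reduction in the masses $|\mu|(R_j)$." That reduction fails: if the weighted overlap is $N$, the level family $\{B_j: m_j>s\}$ has unweighted overlap only $\le N/s$, so the level-wise bound integrates to $\int_0^W c(n)(N/s)\,ds=\infty$; dyadic grouping of the weights fails the same way, since each of the unboundedly many weight scales can contribute $\simeq N\|f\|_{S^1(K)}$. What the paper does instead is prove the coloring lemma directly for \emph{sequences with repetitions} (Lemma \ref{sequenceoverlap}), which works because Gr\"unbaum's transversal bound (Theorem \ref{transversalnumber}) is insensitive to multiplicity; integer weights are then literally multiplicities, and rational/real weights follow by clearing denominators and approximation (Lemma \ref{descreetduality}). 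Second, your assertion that the discretized cone-overlap $|\mu|\big(\bigcup_{j:\,z_j\in\Gamma_x}R_j\big)$ is "comparable to $\|\mu\|_{T^1(K)}$" is not automatic: $\|\mu\|_{T^1(K)}$ is an \emph{essential} supremum of $A_1(\mu)$, while the discrete lemma requires a bound at every $x$ (the maximum of a finite sum of indicators of closed balls may be attained only on a null set), and moreover the cells $R_j$ straddle $\partial\Gamma_x$, so the union sits in an enlarged cone. The paper spends Lemma \ref{supmax} (pointwise sup $\simeq$ essential sup, via a John-ellipsoid density estimate at boundary points) and Lemma \ref{gammadelta} (a $3^n$-cone covering of the enlarged truncated cone) on precisely this point. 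Until these two ingredients are actually established, the forward bound -- and with it the representation $l(f)=\int_{\upperhalf}f\,d\mu$ on all of $S_0^1(K)$, which your converse defers to it -- remains unproved.
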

Unfortunately, the above duality does not extend to other values of $p$, since for any point $(x,t) \in \upperhalf$, the point mass measure $\delta_{(x,t)}$ defines a bounded linear functional on $S_0^p(K)$. However, as the next theorem shows, the predual of $T^p(K)$ lies in $L_{\text{loc}}^{p'}(\upperhalf)$. Moreover, as Theorem \ref{xdual} demonstrates, for $1 < p < \infty$, even in the dyadic setting, the dual of $S_0^p(K)$ is much more complicated than that of $S_0^1(K)$.

\begin{theorem}\label{preduals}
	For a given symmetric convex body $K \subset \rn$ and $1 < p < \infty$, the space $T^p(K)$ is a dual space, and its predual $U^{p'}(K)$ consists of functions $f \in L_{\text{loc}}^{p'}(\upperhalf)$ for which the following atomic decomposition holds:
	\begin{align}
		&f = \sum_{i=1}^{\infty} \lambda_i f_i, \quad \|f\|_{U^{p'}(K)} \simeq \sum_{i=1}^{\infty} |\lambda_i|, \\
		&f_i(x,t) = a_i(x,t) \int_{B(x,t)} g_i(y) \, dy, \quad \|g_i\|_{L^1(\rn)} = 1, \quad \|a_i\|_{L^{p'}(\upperhalf)} \le 1, \quad i \ge 1.
	\end{align}
In the above, the convergence is understood in the norm topology of $U^{p'}(K)$, and therefore in $L_{\text{loc}}^{p'}(\upperhalf)$. \\

Furthermore, the above decomposition holds for functions in $S_0^1(K)$, the predual of $T^1(K)$, with the modification that in this case, $a_i \in C_c(\upperhalf)$ with $|a_i(x,t)| \le 1$ for all $(x,t) \in \upperhalf$.
\end{theorem}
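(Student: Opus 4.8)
The plan is to realize $U^{p'}(K)$ as a concrete quotient of a Bochner space and then dualize. Consider $E := L^1(\rn; L^{p'}(\upperhalf))$, whose elements are measurable families $y \mapsto h_y \in L^{p'}(\upperhalf)$ with $\|h\|_E = \int_{\rn}\|h_y\|_{L^{p'}(\upperhalf)}\,dy < \infty$, and define the ``integrate over the cone fiber'' map $\Sigma\colon E \to L^{p'}(\upperhalf)$ by
\[
\Sigma(h)(x,t) := \int_{B(x,t)} h_y(x,t)\,dy .
\]
Minkowski's integral inequality shows $\Sigma$ is a contraction. The first step is to check that $\Sigma$ identifies $U^{p'}(K)$, equipped with the atomic norm $\inf\sum_i|\lambda_i|$, isometrically with $E/\ker\Sigma$. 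The point is that a normalized rank-one tensor $h_y = g(y)\,a$ (with $\|g\|_{L^1(\rn)}=1$, $\|a\|_{L^{p'}(\upperhalf)}\le 1$) is sent by $\Sigma$ to exactly the atom $a(x,t)\int_{B(x,t)}g$, and that the projective tensor description $L^1(\rn)\hat\otimes_\pi L^{p'}(\upperhalf)=L^1(\rn;L^{p'}(\upperhalf))$ computes $\|h\|_E$ as the infimum of $\sum_i|\lambda_i|$ over decompositions of $h$ into such normalized tensors. Matching the two infima gives the isometry, and in particular shows $U^{p'}(K)$ is a Banach space and that $\Sigma$ is a metric surjection onto it.

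Next I would dualize. Since $L^p(\upperhalf)$ is reflexive for $1<p<\infty$ (hence has the Radon--Nikodym property) and Lebesgue measure on $\upperhalf$ is $\sigma$-finite, the Bochner duality $E^* = L^\infty(\rn; L^p(\upperhalf))$ holds, with pairing $\langle\Phi,h\rangle=\int_{\rn}\int_{\upperhalf}\Phi_y\,h_y$. Under the quotient identification, $U^{p'}(K)^* = (\ker\Sigma)^\perp$, so the task reduces to describing which $\Phi=(\Phi_y)_y$ annihilate $\ker\Sigma$. Testing against $h\in\ker\Sigma$ supported off the fibered set $\{(y,(x,t)):(x,t)\in\Gamma_y\}$ forces $\Phi_y(x,t)=0$ there, while testing against $h$ supported on this set with $\int_{B(x,t)}h_y(x,t)\,dy=0$ (``mean zero along each fiber'') forces $\Phi_y(x,t)$ to be independent of $y$ along each fiber. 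Hence $\Phi_y(x,t)=F(x,t)\,\mathbbm{1}_{\Gamma_y}(x,t)$ for a single function $F$ on $\upperhalf$, and
\[
\|\Phi\|_{E^*}=\operatorname*{ess\,sup}_{y}\|F\|_{L^p(\Gamma_y)} = \|A_p(F)\|_{L^\infty(\rn)}=\|F\|_{T^p(K)} .
\]
This is the isometric identification $U^{p'}(K)^* \cong T^p(K)$; unwinding the pairing through $\Sigma$ shows the functional induced by $F$ acts on an atom $f_i$ by $\int_{\upperhalf} F f_i$, so the pairing is plain integration, and $T^p(K)$ is exhibited as a dual space.

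I expect the fiberwise-constancy step to be the crux: turning ``$\langle\Phi,h\rangle=0$ for every $h$ with zero fiber average'' into the pointwise statement that $\Phi$ is constant along cone fibers requires a measurable-selection argument on $\{(y,(x,t)):(x,t)\in\Gamma_y\}$, together with care that the exceptional null sets can be chosen uniformly. A secondary technical point is the legitimacy of $E^*=L^\infty(\rn;L^p(\upperhalf))$, which is where reflexivity enters and, crucially, where the argument breaks for $p=1$, since $L^1(\upperhalf)^*=L^\infty$ and $C_0(\upperhalf)^*=\mathcal M(\upperhalf)$ lack the Radon--Nikodym property. For the final assertion about $S^1_0(K)$ I would therefore not dualize, but instead invoke Theorem~\ref{duality1} (which already identifies $S^1_0(K)$ as the predual of $T^1(K)$) and prove only the decomposition itself: atoms with $a_i\in C_c$, $|a_i|\le1$ lie in $C_c(\upperhalf)\subset S^1_0(K)$ with uniformly bounded $S^1(K)$-norm, and by Theorem~\ref{duality1} any $\mu\in T^1(K)$ annihilating all such atoms satisfies $\mu|_{\Gamma_y}=0$ for a.e.\ $y$, hence $\mu=0$; a Hahn--Banach/bipolar argument then shows the atomic combinations are dense in $S^1_0(K)$, and the open mapping theorem applied to the surjection $L^1(\rn;C_0(\upperhalf))\to S^1_0(K)$ yields the equivalence of the atomic norm with $\|\cdot\|_{S^1(K)}$.
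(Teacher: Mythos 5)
Your route for $1<p<\infty$ is genuinely different from the paper's, and it is viable. The paper never touches vector-valued duality: it defines $V^{p'}(K)$ by the maximal norm $\sup_{\|g\|_{T^p(K)}\le 1}\int_{\upperhalf}|fg|$, takes $U^{p'}(K)$ to be the closure of $C_c(\upperhalf)$ there, represents an arbitrary functional locally by some $g\in L^p_{\text{loc}}(\upperhalf)$, and recovers $\|g\|_{T^p(K)}\le\|l\|$ by testing against exactly your atoms via Fubini; the atomic decomposition is then not proved by hand but read off from the abstract Coifman--Lions--Meyer--Semmes lemma (Theorem \ref{atomiclemma}), with the atoms as the norming set $V$. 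Your quotient realization $U^{p'}(K)\cong L^1(\rn;L^{p'}(\upperhalf))/\ker\Sigma$ buys an isometric statement (the atomic norm is exactly the quotient norm, via $L^1\hat{\otimes}_\pi L^{p'}=L^1(L^{p'})$), makes the atoms definitional rather than a byproduct, and explains structurally why $p=1$ must be treated separately (failure of RNP). The cost is the annihilator computation, which you correctly flag as the crux but leave open; it does close. One concrete way: for $\psi\in L^{p'}(\upperhalf)$ and $g_1,g_2\in L^1(\rn)$, the element $h_y(x,t)=\psi(x,t)\,1_{B(x,t)}(y)\bigl[g_1(y)\int_{B(x,t)}g_2-g_2(y)\int_{B(x,t)}g_1\bigr]$ lies in $\ker\Sigma$ and in $E$; annihilating all such $h$, and running $\psi,g_1,g_2$ over countable dense families, forces that for a.e.\ $(x,t)$ the functional $g\mapsto\int_{B(x,t)}\Phi_y(x,t)g(y)\,dy$ is proportional to $g\mapsto\int_{B(x,t)}g$, i.e.\ $\Phi_y(x,t)=F(x,t)$ for a.e.\ $y\in B(x,t)$; a jointly measurable representative of $\Phi$ handles the null-set bookkeeping. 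So this half of your plan is sound, just unexecuted.

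The genuine gap is in your last paragraph, the $S_0^1(K)$ case. You cannot ``apply the open mapping theorem to the surjection $L^1(\rn;C_0(\upperhalf))\to S_0^1(K)$'': surjectivity of that map \emph{is} the statement that every $f\in S_0^1(K)$ admits an atomic decomposition, i.e.\ the thing being proved. What your Hahn--Banach step actually gives (every $\mu\in T^1(K)$ killing all atoms is zero) is only dense range, and dense range does not imply surjectivity: the map $\ell^1\to\ell^2$, $\lambda\mapsto\sum_n\lambda_n e_n/n$, has dense range and is not onto. To close the argument you need the \emph{quantitative} norming property $\|\mu\|_{T^1(K)}\lesssim\sup|\langle\mu,f\rangle|$ over atoms $f$ --- which is available, since it is exactly how the paper's proof of Theorem \ref{duality1} bounds $\|\mu\|_{T^1(K)}$ by $\|l\|_{S_0^1(K)^*}$ --- and then the bipolar theorem gives $\overline{T(B)}\supset c\,B_{S_0^1(K)}$, after which the completeness half of the open-mapping argument upgrades this to $T(B)\supset c'B_{S_0^1(K)}$ and hence to the decomposition with controlled coefficients. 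This combination of bipolar plus iteration is precisely the content of Theorem \ref{atomiclemma}, which is how the paper treats $p=1$ and $p>1$ uniformly; invoking it (with the atoms \eqref{loneatom} as $V$, legitimized by the norming estimate just mentioned) repairs your final step in one line.
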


As a corollary of Theorem \ref{preduals} we have:	
\begin{corollary}
	For $1<p<\infty$, any function $f \in \jnprn$, and $0 < \delta < 1$, there exist three sequences of positive numbers, $\left\{\lambda_i\right\}_{i\ge1}$, integrable functions $\left\{g_i\right\}_{i\ge1}$, and compactly supported continuous functions $\left\{a_i\right\}_{i\ge1}$ such that for every cube $Q$ with side length $\delta < l(Q) < \delta^{-1}$, we have
	\begin{align}
		& \osc{f}{Q}^p = \sum_{i=1}^{\infty} \lambda_i a_i(Q) \ave{g_i}{Q}, \quad \sum_{i=1}^{\infty} \lambda_i \lesssim \|f\|_{\jnprn}^p, \\
		& \|g_i\|_{L^1(\rn)} = 1, \quad |a_i| \le 1, \quad a_i(Q) := a_i(x,t), \quad Q = x + tQ_0, \quad i \ge 1.
	\end{align}
	In the above, the convergence is understood in $S^1(K)$ and thus uniform on the entire upper-half space.
\end{corollary}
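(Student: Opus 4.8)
The plan is to deduce the corollary directly from the second half of Theorem~\ref{preduals}, i.e.\ the atomic decomposition valid for functions in $S_0^1(K)$. The starting point is the example given earlier in the excerpt: for $g\in\jnprn$ with $1<p<\infty$ and the fixed base $Q_0=[-1,1]^n$, the function
\[
f(x,t)=\osc{g}{B_{Q_0}(x,t)}\,|B_{Q_0}(x,t)|^{\frac{1}{p}}
\]
lies in $S^p(Q_0)$ with $\|f\|_{S^p(Q_0)}=\jnp{g}$. The quantity we actually want to decompose is $\osc{g}{Q}^p$, so I would pass to the $p$-th power: set $F:=f^p$, so that $F(x,t)=\osc{g}{B_{Q_0}(x,t)}^p\,|B_{Q_0}(x,t)|$, which records exactly $\osc{g}{Q}^p\,|Q|$ when the point $(x,t)$ is identified with the cube $Q=x+tQ_0$. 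The first key step is to verify that $F\in S^1(Q_0)$ with $\|F\|_{S^1(Q_0)}=\|f\|_{S^p(Q_0)}^p=\jnp{g}^p$; this is immediate from the definitions, since a sum of $|F_B|=|f_B|^p$ over a disjoint collection is literally a sum of $p$-th powers of $|f_B|$.

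The role of the restriction $\delta<l(Q)<\delta^{-1}$ is to confine attention to a strip: the cube $Q=x+tQ_0$ has side length $2t$, so the constraint on $l(Q)$ forces $t$ to lie in a bounded interval $[a,b]\subset(0,\infty)$. The next step is therefore to multiply $F$ by a cutoff supported in this strip, or more cleanly, to invoke Proposition~\ref{stripproposition}: after localizing $F$ to the strip $\rn\times[a,b]$ (the values of $F$ outside the strip are irrelevant to the cubes under consideration), the resulting function lies in $S_0^1(Q_0)$. This places us squarely in the hypothesis of the final clause of Theorem~\ref{preduals}, which applies to elements of $S_0^1(K)$.

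Applying that clause to the strip-localized $F$ yields an atomic decomposition
\[
F=\sum_{i=1}^{\infty}\lambda_i f_i,\qquad f_i(x,t)=a_i(x,t)\int_{B(x,t)}g_i(y)\,dy,
\]
with $\sum_i|\lambda_i|\simeq\|F\|_{U^{1}(Q_0)}$, $\|g_i\|_{L^1}=1$, $a_i\in C_c(\upperhalf)$, $|a_i|\le1$, and convergence in the $S^1$-norm, hence uniform on $\upperhalf$. Reading this off at a point $(x,t)$ corresponding to an admissible cube $Q$ and recalling $|B(x,t)|\int_{B(x,t)}g_i=|Q|\ave{g_i}{Q}$ after absorbing the volume factor $|Q|$ into $F=\osc{g}{Q}^p|Q|$, I would rearrange to isolate $\osc{f}{Q}^p=\sum_i\lambda_i a_i(Q)\ave{g_i}{Q}$; the normalization $\sum_i\lambda_i\lesssim\jnp{g}^p$ follows from $\|F\|_{S^1}=\jnp{g}^p$ together with the equivalence $\|F\|_{U^1}\simeq\|F\|_{S^1}$ on $S_0^1$. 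The positivity of the $\lambda_i$ can be arranged since $F\ge0$, though one may simply absorb signs into the $a_i$. The main obstacle I anticipate is purely bookkeeping rather than conceptual: one must track the volume factor $|B(x,t)|$ carefully so that the bare average $\ave{g_i}{Q}$ (not $\int_{B}g_i$) appears in the final identity, and one must confirm that the strip localization via Proposition~\ref{stripproposition} does not disturb the values of $F$ on the relevant cubes, so that the decomposition faithfully reproduces $\osc{g}{Q}^p$ for every $Q$ in the admissible range.
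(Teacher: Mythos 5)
Your proposal follows essentially the same route as the paper: form $F(x,t)=\osc{f}{B_{Q_0}(x,t)}^p\,|B_{Q_0}(x,t)|$, localize it to a strip $\rn\times[a,b]$ with a continuous cutoff in $t$ (chosen $\equiv 1$ on the $t$-range corresponding to $\delta<l(Q)<\delta^{-1}$, so the values on admissible cubes are untouched), conclude membership in $S_0^1(Q_0)$ via Proposition \ref{stripproposition}, and apply the final clause of Theorem \ref{preduals}; the volume bookkeeping you flag cancels exactly as you anticipate. This is correct and matches the paper's proof.
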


\begin{proof}
	Take a continuous function $\varphi$ on $\mathbb{R}_+$ such that $\varphi = 1$ on $[\delta, \delta^{-1}]$, and supported on an interval $[a,b]\subset\mathbb{R}_+$. Then, note that the function
	\[
	g(x,t) = \varphi(t) \osc{f}{B_{Q_0}(x,t)}^p |B_{Q_0}(x,t)|,
	\]
	belongs to $S^1(Q_0)$ and vanishing outside of the strip $\rn\times\left[a,b\right]$. Therefore, the claim follows from Theorem \ref{preduals} and Proposition \ref{stripproposition}.
\end{proof}

Now, we proceed to the proof of Theorem \ref{duality1}, and to this end, we recall some notions of graph theory. A graph $G=(V,E)$ is a finite simple graph, i.e., a finite set of vertices $V$ and a set of edges $E$ joining pairs of elements in $V$ such that any two vertices are connected by at most one edge. A graph $H=(V',E')$ is called an induced subgraph of $G$ if $V' \subset V$, $E' \subset E$, and $E'$ contains all the edges between elements of $V'$ in $G$. In a graph $G=(V,E)$, the degree of each vertex is the number of vertices adjacent to it. A graph is called $k$-degenerate if every induced subgraph of $G$ has a vertex with degree at most $k$. A proper $k$-coloring of a graph $G$ is a way of coloring each vertex of $G$ with $k$ colors, such that no two adjacent vertices share the same color. A graph is called $k$-colorable if such a coloring exists, and the minimum number of colors required for a proper coloring of $G$ is referred to as the chromatic number of $G$ and is denoted by $\chi(G)$. Here, we will use the simple fact that every $k$-degenerate graph is $(k+1)$-colorable, that is, $\chi(G) \le k+1$ \cite{MR1633290}.\\

Given a sequence of sets $S=\{S_i\}_{i\ge1}$, there is a natural graph, called the intersection graph of $S$, which captures the intersection structure of $S$ (note that by a sequence we mean that it is allowed to have $S_i=S_j$ with $i\neq j$). For any set in the sequence, consider a vertex and join two vertices by an edge whenever their intersection is non-empty. Here, we will be dealing with the intersection graphs of finite sequences of $K$-balls. The problem that we will face is to bound the chromatic number of the intersection graph of a sequence of $K$-balls, $S=\{B_i\}_{i\ge1}$, with its total overlap defined as
\[
\textbf{O}(S):=\max_{x\in \rn}\sum_{i\ge1}1_{B_i}(x).
\]
As mentioned before, these kinds of problems have been studied in combinatorial convex geometry, and a major tool here is the notion of a transversal set or a piercing set. Given a collection of distinct objects, $\mathcal{C}$, a set is called transversal to $\mathcal{C}$ if every set in $\mathcal{C}$ contains a point from that set, and the cardinality of a minimal such set is called the transversal or piercing number of $\mathcal{C}$ and is denoted by $\tau(\mathcal{C})$. Below, we present an interesting simple result in this direction, which we use shortly.
\begin{theorem}\label{transversalnumber}
	Let $\mathcal{C}$ be a finite collection of distinct $K$-balls, $B_1$ be one with the smallest volume, and $\mathcal{C}_1$ be the sub-collection of all $K$-balls in $\mathcal{C}$ that intersect $B_1$. Then, there exists a dimensional constant $c(n)$ such that $\tau(\mathcal{C}_1)\le c(n)$. 
\end{theorem}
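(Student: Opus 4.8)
The plan is to produce an explicit piercing set of dimensional cardinality. Write $B_1 = B(x_1,t_1)$; since $B_1$ has the smallest volume and $|B(y,s)| = s^n|K|$, every ball $B = B(y,s) \in \mathcal{C}_1$ has radius $s \ge t_1$ and meets $B_1$. The idea is to show that each such $B$, despite possibly having a huge radius and a far-away center, must contain a $K$-ball of radius $t_1/2$ whose center is confined to the fixed region $B(x_1, 2t_1)$; once this is established, any $(t_1/2)$-net of $B(x_1, 2t_1)$ pierces every member of $\mathcal{C}_1$, and a volume count bounds the size of such a net by a dimensional constant.

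The main geometric step, and the step I expect to be the crux, is the localization just described. First I would fix $B = B(y,s) \in \mathcal{C}_1$ and pick a point $z \in B \cap B_1$, so that $z \in B(x_1,t_1)$ and $z = y + sw$ for some $w \in K$. The key observation is that because $B = y + sK$ is convex with center $y$, for any $\lambda \in [0,1]$ the shrunken ball centered at $z_\lambda := (1-\lambda)y + \lambda z$ of radius $(1-\lambda)s$ is still contained in $B$: indeed $z_\lambda + (1-\lambda)s\,u = y + s(\lambda w + (1-\lambda)u)$ and $\|\lambda w + (1-\lambda)u\|_K \le 1$ by convexity of $\|\cdot\|_K$. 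Choosing $\lambda = 1 - t_1/(2s)$ makes the radius exactly $t_1/2$, while $\|z_\lambda - z\|_K = (t_1/(2s))\|y - z\|_K \le t_1/2$ since $z \in B(y,s)$; hence $z_\lambda \in B(x_1, 2t_1)$ and $B(z_\lambda, t_1/2) \subseteq B$. The delicate point is precisely that this works uniformly in $s \ge t_1$: the convex-combination trick trades the largeness of $s$ against the shrink factor, so that the resulting sub-ball has fixed radius and stays near $x_1$ no matter how large $s$ is.

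Finally I would choose a maximal $(t_1/2)$-separated subset $P$ of $B(x_1, 2t_1)$ with respect to $\|\cdot\|_K$; by maximality $P$ is a $(t_1/2)$-net. For each $B \in \mathcal{C}_1$ the center $z_\lambda$ of the sub-ball above lies within $t_1/2$ of some $p \in P$, so $p \in B(z_\lambda, t_1/2) \subseteq B$, and thus $P$ is transversal to $\mathcal{C}_1$. To bound $|P|$, the balls $\{B(p, t_1/4) : p \in P\}$ are pairwise disjoint and all contained in $B(x_1, 9t_1/4)$, so comparing volumes gives $|P|\,(t_1/4)^n|K| \le (9t_1/4)^n|K|$, i.e.\ $|P| \le 9^n$. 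Taking $c(n) = 9^n$ completes the argument. The only routine parts are the triangle-inequality checks for the disjointness and containment in this last paragraph; I do not expect any step beyond the localization lemma to present real difficulty.
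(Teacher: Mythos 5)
Your proof is correct, but note that it cannot be compared line-by-line with ``the paper's proof'' because the paper gives none: Theorem \ref{transversalnumber} is quoted from the literature, with the proof outsourced to Theorem 1 of Gr\"unbaum \cite{MR125491} and Lemma 5 of Dumitrescu--Jiang \cite{MR2971748}. What you have produced is a self-contained, elementary substitute for that citation, and it holds up. The localization step is the real content and it is right: writing $z=y+sw$ with $\|w\|_K\le 1$, the identity $z_\lambda+(1-\lambda)su = y+s(\lambda w+(1-\lambda)u)$ together with convexity of $\|\cdot\|_K$ gives $B(z_\lambda,(1-\lambda)s)\subseteq B(y,s)$, and the choice $1-\lambda=t_1/(2s)$ (legitimate since every $B(y,s)\in\mathcal{C}_1$ has $s\ge t_1$, smallest volume being equivalent to smallest radius) gives a sub-ball of radius exactly $t_1/2$ whose center satisfies $\|z_\lambda-x_1\|_K\le t_1/2+t_1=3t_1/2$. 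The piercing step (maximal $(t_1/2)$-separated set $P$ in $B(x_1,2t_1)$, packing bound via disjointness of the $B(p,t_1/4)$ inside $B(x_1,9t_1/4)$) is the standard net-versus-packing count and yields the explicit constant $c(n)=9^n$, which is something the paper's citation does not display. Two cosmetic remarks only: since $z_\lambda\in B(x_1,3t_1/2)$, you could take the net in $B(x_1,3t_1/2)$ and improve the constant; and if ``$(t_1/2)$-separated'' is read with non-strict inequality, two balls $B(p,t_1/4)$, $B(p',t_1/4)$ with $\|p-p'\|_K=t_1/2$ can share boundary points, so strictly one should say their interiors are disjoint (the volume comparison is unaffected, as the overlaps are null sets). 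Neither affects correctness; your argument is a valid and arguably more useful replacement for the paper's external reference.
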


See Theorem 1 in \cite{MR125491}, or Lemma 5 in \cite{MR2971748} for a proof. One particular application of the above theorem is that a finite collection of $K$-balls, $\mathcal{C}$, can be decomposed into $c(n) \textbf{O}(\mathcal{C}) + 1$ disjoint sub-collections. In the next lemma, we prove a slight generalization of this result for a finite sequence of $K$-balls.

\begin{lemma}\label{sequenceoverlap}
	Let $\mathcal{C}$ be a finite collection of distinct $K$-balls and $S=\{B_i\}_{i\ge1}$ a finite sequence of elements in $\mathcal{C}$. More precisely, for each $i\ge1$, $B_{i}\in \mathcal{C}$, and it might be the case that $B_{i}=B_{j}$ with $i\neq j$. Then, for $G$ the intersection graph of $S$, we have
	\[
	\chi(G)\le c(n) \textbf{O}(S)+1.
	\]
\end{lemma}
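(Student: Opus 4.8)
The plan is to bound the degeneracy of $G$ and then invoke the fact, recalled just above the statement, that a $k$-degenerate graph is $(k+1)$-colorable. Concretely, I would set $k := c(n)\,\textbf{O}(S)$, where $c(n)$ is the dimensional constant from Theorem \ref{transversalnumber}, and show that \emph{every} induced subgraph of $G$ contains a vertex of degree at most $k$; this is precisely the assertion that $G$ is $k$-degenerate.

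An induced subgraph of $G$ is determined by a subset $I$ of the index set of $S$: its vertices are the terms $B_i$ with $i \in I$, and $i,j$ are adjacent exactly when $B_i \cap B_j \neq \emptyset$. The first step is to pass from the sequence, which may repeat balls, to the underlying finite collection of \emph{distinct} balls $\mathcal{C}_I := \{B_i : i \in I\} \subset \mathcal{C}$, so that Theorem \ref{transversalnumber} becomes applicable. Let $B^* \in \mathcal{C}_I$ be a ball of smallest volume among them, and let $\mathcal{C}_I^*$ be the sub-collection of balls in $\mathcal{C}_I$ that meet $B^*$. By Theorem \ref{transversalnumber} applied to $\mathcal{C}_I$, we have $\tau(\mathcal{C}_I^*) \le c(n)$, so there is a piercing set $T$ with $|T| \le c(n)$ such that every ball of $\mathcal{C}_I^*$ contains a point of $T$.

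The second step converts this piercing bound into a degree bound, and this is where the multiplicities of the sequence must be tracked. I would choose an index $i_0 \in I$ with $B_{i_0} = B^*$. For any neighbor $j \in I$ of $i_0$ we have $B_j \cap B^* \neq \emptyset$, hence $B_j \in \mathcal{C}_I^*$, and so $B_j$ contains some point $t \in T$. For each fixed $t \in T$, the number of indices $j \in I$ with $t \in B_j$ is at most $\sum_{i \ge 1} 1_{B_i}(t) \le \textbf{O}(S)$, directly from the definition of the total overlap. Summing over the at most $c(n)$ points of $T$ bounds the number of neighbors of $i_0$ by $c(n)\,\textbf{O}(S) = k$. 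Since $I$ was arbitrary, $G$ is $k$-degenerate, and therefore $\chi(G) \le k+1 = c(n)\,\textbf{O}(S)+1$.

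The only genuinely delicate point is the bookkeeping in the second step: because $S$ is a sequence and not a set, one must count \emph{indices} rather than distinct balls when estimating the degree, and it is exactly the definition of $\textbf{O}(S)$ (which counts terms with multiplicity) that makes the piercing-set argument yield the stated bound. Everything else is a direct application of Theorem \ref{transversalnumber} together with the standard degeneracy-coloring estimate, so I do not expect any further obstacle.
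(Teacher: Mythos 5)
Your proposal is correct and follows essentially the same route as the paper's proof: apply Theorem \ref{transversalnumber} to the smallest-volume ball, convert the piercing bound into a degree bound of $c(n)\,\textbf{O}(S)$ by counting indices (with multiplicity) through each piercing point, and conclude via $k$-degeneracy. If anything, your version is slightly more careful than the paper's, which exhibits the low-degree vertex only in the full graph and leaves implicit the fact that the same argument applies to every induced subgraph (as the overlap of any subsequence is at most $\textbf{O}(S)$), whereas you verify the degeneracy condition explicitly.
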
    

\begin{proof}
	Without loss of generality, assume $B_1$ has the smallest volume. Then, from Theorem \ref{transversalnumber}, there exists a set of points $F$ with cardinality at most $c(n)$ such that every $K$-ball in $S$ intersecting $B_1$ contains a point of $F$. Now, for $x\in F$, the number of elements in $S$ containing $x$ is at most $\textbf{O}(S)$. Therefore, the number of $K$-balls in the sequence intersecting $B_1$ is at most $c(n)\textbf{O}(S)$, which implies that the intersection graph of $S$ is $c(n)\textbf{O}(S)$-degenerate and thus it must be $c(n)\textbf{O}(S)+1$-colorable. The proof is complete.
\end{proof}

\begin{remark}
In \cite{MR2065259,MR2097318,MR2807079}, the upper bounds for the chromatic number are given in terms of the clique number of the graph. Consequently, these bounds, together with the above lemma, imply that for the intersection graph of a finite collection of \( K \)-balls, the chromatic number, clique number, and total overlap are all comparable.

\end{remark} 

Now, with Lemma \ref{sequenceoverlap} in hand, we prove a discrete version of Theorem \ref{duality1}.

\begin{lemma}\label{descreetduality}
	Let $\mathcal{C}$ be a finite collection of $K$-balls, $f=\{f_B\}_{B \in \mathcal{C}}$, and $g=\{g_B\}_{B \in \mathcal{C}}$ two sequences of real numbers indexed by elements in $\mathcal{C}$. Also, let
	\[
	\|f\|_{S^1(\mathcal{C})}:=\max_{\mathcal{C}'}\sum_{B\in\mathcal{C}'}|f_{B}|, \quad \|g\|_{T^1(\mathcal{C})}:=\max_{x\in \rn}\sum_{B\in\mathcal{C}}|g_{B}|1_{B}(x),
	\]
	where the maximum on the left is taken over disjoint sub-collections of $\mathcal{C}$. Then, we have
	\begin{align}\label{descreetduality1}
		&|\sum_{B\in\mathcal{C}} f_B g_B|\lesssim \|f\|_{S^1(\mathcal{C})}\|g\|_{T^1(\mathcal{C})}.
	\end{align}
\end{lemma}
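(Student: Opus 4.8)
The plan is to prove \eqref{descreetduality1} by a weighted layer-cake (Calderón--Zygmund style) decomposition of the index set $\mathcal{C}$ according to the size of $|g_B|$, and then to apply Lemma \ref{sequenceoverlap} on each layer to control the chromatic number of the relevant intersection graph in terms of the total overlap, which in turn is dominated by $\|g\|_{T^1(\mathcal{C})}$. Concretely, first I would dispose of the trivial cases (if either norm vanishes there is nothing to prove), and by homogeneity normalize so that $\|g\|_{T^1(\mathcal{C})}=1$. The quantity $\sum_{B\in\mathcal{C}}f_Bg_B$ is bounded by $\sum_{B\in\mathcal{C}}|f_B|\,|g_B|$, so it suffices to estimate this absolute sum, and the game is to distribute the mass of each $|f_B|$ against the disjointness budget recorded by $\|f\|_{S^1(\mathcal{C})}$.

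Next I would introduce dyadic level sets: for each integer $k$ let $\mathcal{C}_k:=\{B\in\mathcal{C}: 2^{-k-1}<|g_B|\le 2^{-k}\}$ (recall $\|g\|_{T^1(\mathcal{C})}=1$ forces $|g_B|\le 1$, so only $k\ge 0$ occur). On the layer $\mathcal{C}_k$ I would build a \emph{sequence} of $K$-balls that repeats each $B\in\mathcal{C}_k$ roughly $2^{-k}/|g_B|$-many times — or, more cleanly, form the sequence $S_k$ in which each ball $B\in\mathcal{C}_k$ appears with multiplicity $m_B:=\lceil 2^{k+1}|g_B|\rceil$, so that $m_B\simeq 2^k|g_B|$ and $m_B\ge 1$. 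The point of this integer multiplicity trick is that the total overlap of the sequence $S_k$ is exactly $\mathbf{O}(S_k)=\max_x\sum_{B\in\mathcal{C}_k}m_B\,1_B(x)\simeq 2^k\max_x\sum_{B\in\mathcal{C}_k}|g_B|1_B(x)\le 2^k\|g\|_{T^1(\mathcal{C})}=2^k$. Applying Lemma \ref{sequenceoverlap} to $S_k$ gives a proper coloring of its intersection graph with at most $c(n)\mathbf{O}(S_k)+1\lesssim 2^k$ colors; each color class is a disjoint sub-collection of $\mathcal{C}$, and the number of such disjoint sub-collections needed to cover $\mathcal{C}_k$ (counting multiplicity) is $\lesssim 2^k$. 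Summing $\sum_{B\in\mathcal{C}_k}|f_B|$ over each color class is $\le \|f\|_{S^1(\mathcal{C})}$ by definition of the $S^1(\mathcal{C})$-norm, so $\sum_{B\in\mathcal{C}_k}|f_B|\lesssim 2^k\|f\|_{S^1(\mathcal{C})}$ (here the multiplicity $m_B\ge 1$ ensures every $B\in\mathcal{C}_k$ is covered at least once).

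Finally I would assemble the layers: since $|g_B|\le 2^{-k}$ on $\mathcal{C}_k$,
\[
\sum_{B\in\mathcal{C}}|f_B||g_B|=\sum_{k\ge 0}\sum_{B\in\mathcal{C}_k}|f_B||g_B|\le \sum_{k\ge 0}2^{-k}\sum_{B\in\mathcal{C}_k}|f_B|\lesssim \sum_{k\ge 0}2^{-k}\,2^k\,\|f\|_{S^1(\mathcal{C})},
\]
which diverges, so this naive bookkeeping loses the summability and must be repaired. The honest fix, and the step I expect to be the main obstacle, is to avoid the lossy factor $2^{-k}\cdot 2^k$: one must cover $\mathcal{C}_k$ by $\lesssim 2^k$ disjoint families but then exploit that across all layers the families are measuring \emph{disjoint} contributions against the single budget $\|f\|_{S^1(\mathcal{C})}$ only within a layer. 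The correct route is to apply Lemma \ref{sequenceoverlap} not layer-by-layer but to a single sequence $S$ in which each $B$ appears with multiplicity $m_B:=\lceil N|g_B|\rceil$ for a global scale $N$, giving $\mathbf{O}(S)\lesssim N\|g\|_{T^1(\mathcal{C})}=N$ and hence a coloring into $\lesssim N$ disjoint classes $\mathcal{D}_1,\dots,\mathcal{D}_{\lesssim N}$ (with multiplicity). Each ball $B$ then lands in about $m_B\simeq N|g_B|$ distinct classes, so $\sum_{j}\sum_{B\in\mathcal{D}_j}|f_B|\ge \sum_B m_B|f_B|\simeq N\sum_B|g_B||f_B|$; on the other hand each class is disjoint so $\sum_{B\in\mathcal{D}_j}|f_B|\le\|f\|_{S^1(\mathcal{C})}$, whence $N\sum_B|g_B||f_B|\lesssim (\text{number of classes})\cdot\|f\|_{S^1(\mathcal{C})}\lesssim N\|f\|_{S^1(\mathcal{C})}$, and the factors of $N$ cancel to yield $\sum_B|g_B||f_B|\lesssim\|f\|_{S^1(\mathcal{C})}\|g\|_{T^1(\mathcal{C})}$ after undoing the normalization. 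Care is needed because $m_B=\lceil N|g_B|\rceil$ overcounts by at most one per ball, so I would choose $N$ large (or pass to the limit $N\to\infty$) to make the ceiling error negligible relative to $N|g_B|$; handling the balls with $|g_B|$ small compared to $1/N$ separately, or absorbing them into the error, is the delicate bookkeeping that makes this the crux of the argument.
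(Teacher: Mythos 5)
Your final (repaired) argument is correct and is essentially the paper's own proof: both convert the weights $g_B$ into integer multiplicities, form the sequence of $K$-balls with those multiplicities, apply Lemma \ref{sequenceoverlap} to color its intersection graph with $\lesssim \|g\|_{T^1(\mathcal{C})}$ colors, and bound the sum of $|f_B|$ over each (disjoint) color class by $\|f\|_{S^1(\mathcal{C})}$. The only difference is the discretization bookkeeping — the paper passes from integer to rational coefficients via a common denominator and then to real coefficients by approximation from below, while you round with ceilings $m_B=\lceil N|g_B|\rceil$ at a large scale $N$; your version is sound because $\mathcal{C}$ is finite, so $\mathbf{O}(S)\le N\|g\|_{T^1(\mathcal{C})}+\#\mathcal{C}\lesssim N$ once $N\ge\#\mathcal{C}$, and the factors of $N$ cancel exactly as you say.
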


\begin{proof}
	After using the triangle inequality, we may assume that $f_B$ and $g_B$ are non-negative numbers for $B \in \mathcal{C}$. Then, assume that the coefficients $g_B$ are all non-negative integers and $\|g\|_{T^1(\mathcal{C})} = N$. Now, form a finite sequence $\{B_i\}_{i \ge 1}$ consisting of members of $\mathcal{C}$ such that every $B \in \mathcal{C}$ appears exactly $g_B$ times in $\{B_i\}_{i \ge 1}$, and whenever $B_i = B$, set $f_{B_i} = f_B$. Now, it follows from Lemma \ref{sequenceoverlap} that we may decompose this sequence into $2c(n)N$ sub-collections $\{\mathcal{C}_j': 1 \le j \le 2c(n) N\}$, such that each consists only of disjoint $K$-balls in $\mathcal{C}$, and thus we have
	\[
	\sum_{B \in \mathcal{C}} f_B g_B = \sum_{i} f_{B_i} = \sum_{1 \le j \le 2c(n) N} \sum_{B \in \mathcal{C}_j'} |f_B| \lesssim N \|f\|_{\mathcal{C}} = \|f\|_{S^{1}(\mathcal{C})} \|g\|_{T^1(\mathcal{C})}.
	\]
	Now, if the coefficients $g_B$ are all rational numbers, we may find a common denominator $M$, multiply the coefficients of $g$ by $M$, use the above inequality, and then divide both sides by $M$ to get the desired inequality. Finally, when the coefficients of $g$ are real numbers, we may approximate them from below by rationals and then take the limit, and this completes the proof.
\end{proof}

To see why the above lemma is the discrete version of Theorem \ref{duality1}, for a finite collection of $K$-balls $\mathcal{C}$ and $g \in T^1(\mathcal{C})$, set
\[
\mu = \sum_{B(y,t) \in \mathcal{C}} g_{B(y,t)} \delta_{(y,t)}.
\]
Then, note that
\[
\int_{\upperhalf} f \, d\mu = \sum_{B \in \mathcal{C}} f_B g_B,
\]
and
\[
A_1(\mu)(x) = |\mu|(\Gamma_x) = \sum_{B \in \mathcal{C}} |g_B| 1_{B}(x), \quad \|g\|_{T^1(\mathcal{C})} = \max_{x \in \rn} A_1(\mu)(x).
\]
Therefore, the next lemma completes the claim.

\begin{lemma}\label{supmax}
	For $\mu$ a non-negative Radon measure on $\upperhalf$, we have
	\[
	\sup_{x \in \rn} A_1(\mu)(x) \simeq \|\mu\|_{T^1(K)}.
	\]
\end{lemma}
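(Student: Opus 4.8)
The plan is to prove the two inequalities separately, the nontrivial content being $\sup_{x\in\rn} A_1(\mu)(x) \lesssim \|\mu\|_{T^1(K)}$. The reverse inequality $\|\mu\|_{T^1(K)} = \linftyrnn{A_1(\mu)} \le \sup_{x\in\rn}A_1(\mu)(x)$ is immediate, since the essential supremum never exceeds the pointwise supremum.

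For the main inequality I would first rewrite $A_1(\mu)$ as a superposition of indicators of $K$-balls. Since $(y,t)\in\Gamma_x$ is equivalent to $x\in B(y,t)$, Tonelli's theorem gives, for non-negative $\mu$,
\[
A_1(\mu)(x) = \int_{\upperhalf} 1_{B(y,t)}(x)\,d\mu(y,t),
\]
which is in particular a measurable function of $x$. The idea is then to compare the value of $A_1(\mu)$ at an arbitrary point $x_0$ with its average over a small $K$-ball $B_K(x_0,\rho)$; since any such average is bounded by $\linftyrnn{A_1(\mu)}$, a lower bound on the average in terms of $A_1(\mu)(x_0)$ will finish the proof. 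Applying Tonelli once more,
\[
\frac{1}{|B_K(x_0,\rho)|}\int_{B_K(x_0,\rho)} A_1(\mu)(x)\,dx = \frac{1}{|B_K(x_0,\rho)|}\int_{\upperhalf} |B_K(x_0,\rho)\cap B(y,t)|\,d\mu(y,t).
\]

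The heart of the argument is a geometric estimate: if $x_0\in B(y,t)$ and $0<\rho\le t$, then
\[
|B_K(x_0,\rho)\cap B(y,t)| \ge 2^{-n}|B_K(x_0,\rho)|.
\]
To prove this I would write $B(y,t)=y+tK$, $B_K(x_0,\rho)=x_0+\rho K$, and set $u=(x_0-y)/t\in K$, $s=\rho/t\in(0,1]$, which (after rescaling by $\rho$) reduces the claim to showing $|\{v\in K: u+sv\in K\}|\ge 2^{-n}|K|$. Here symmetry and convexity of $K$ enter: if $v\in K\cap(-u+K)$, writing $v=-u+r$ with $r\in K$, the point $u+sv=(1-s)u+sr$ is a convex combination of $u,r\in K$, hence lies in $K$; and the translate estimate $K\cap(-u+K)\supseteq -\tfrac{u}{2}+\tfrac12 K$ (valid since $\|u\|_K\le1$) supplies the volume bound. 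Restricting the measure integral to $\Gamma_{x_0}\cap\{t\ge\rho\}$ and invoking this estimate yields
\[
\frac{1}{|B_K(x_0,\rho)|}\int_{B_K(x_0,\rho)} A_1(\mu)(x)\,dx \ge 2^{-n}\,\mu\big(\Gamma_{x_0}\cap\{t\ge\rho\}\big).
\]

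Finally I would let $\rho\to0$: since $\Gamma_{x_0}\cap\{t\ge\rho\}$ increases to $\Gamma_{x_0}$, continuity from below gives $\mu(\Gamma_{x_0}\cap\{t\ge\rho\})\to A_1(\mu)(x_0)$, while the left-hand side stays $\le \linftyrnn{A_1(\mu)}=\|\mu\|_{T^1(K)}$. Hence $A_1(\mu)(x_0)\le 2^n\|\mu\|_{T^1(K)}$ for every $x_0$, and taking the supremum completes the proof. I expect the convexity/symmetry volume estimate to be the main obstacle, as it is the only place the geometry of $K$ is used and it must hold uniformly over all positions of $x_0$ inside $B(y,t)$ — in particular when $x_0$ is near the boundary — and all ratios $s=\rho/t\le1$; the restriction $t\ge\rho$ is precisely what renders the small-$t$ (near-boundary) contributions harmless in the limit.
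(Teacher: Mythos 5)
Your proof is correct, and its overall skeleton is the same as the paper's: both compare $A_1(\mu)(x_0)$ with averages of $A_1(\mu)$ over shrinking balls centered at $x_0$, using Tonelli/Fubini to rewrite such an average as $\frac{1}{|B_K(x_0,\rho)|}\int_{\upperhalf} |B_K(x_0,\rho)\cap B(y,t)|\,d\mu(y,t)$, and then let $\rho\to 0^+$. Where you genuinely differ is in the key geometric estimate. The paper applies Fatou's lemma and therefore only needs to bound the \emph{liminf} of the volume ratio as $\rho\to 0^+$: for $x_0$ interior to $B(y,t)$ the limit is $1$, while for $x_0$ on the boundary the paper invokes John's ellipsoid $E\subset K\subset\sqrt{n}E$, reduces by a linear map to the Euclidean case, and fits a truncated cone inside the ball to get a dimensional lower bound. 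You instead prove a \emph{uniform} bound, valid for every $x_0\in B(y,t)$ (boundary points included) and every $0<\rho\le t$: after rescaling, the claim becomes $|\{v\in K: u+sv\in K\}|\ge 2^{-n}|K|$ for $u\in K$, $s\in(0,1]$, and your inclusion chain $\{v\in K: u+sv\in K\}\supseteq K\cap(-u+K)\supseteq -\frac{u}{2}+\frac{1}{2}K$ is exactly right: the first inclusion since $u+s(-u+r)=(1-s)u+sr$ is a convex combination of points of $K$, the second using central symmetry ($-u\in K$) and convexity. This buys three things the paper's argument does not have: the interior/boundary case distinction disappears; Fatou is replaced by plain continuity from below along $\Gamma_{x_0}\cap\{t\ge\rho\}\uparrow\Gamma_{x_0}$ (your restriction to $t\ge\rho$ is precisely what makes the uniform estimate applicable, and it costs nothing in the limit); and the implied constant becomes the explicit $2^n$, with no recourse to John's ellipsoid. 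Conversely, the paper's cone argument uses less of the structure of the problem (it only needs a lower bound in the limit), but it is both longer and less quantitative; nothing is lost by your stronger uniform bound.
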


\begin{proof}
Fix $x \in \rn$ and $t > 0$, and note that Fubini's theorem implies
\begin{align}
	&\ave{A_1(\mu)(y)}{B(x,t)} dy = \ave{\int_{\Gamma_y} d\mu(z,r)}{B(x,t)} dy = \int_{\upperhalf} \frac{|B(z,r) \cap B(x,t)|}{|B(x,t)|} d\mu(z,r).
\end{align}
Therefore, applying Fatou's lemma, we get
\[
\int_{\Gamma_x} \liminf_{t \to 0^+} \frac{|B(z,r) \cap B(x,t)|}{|B(x,t)|} d\mu(z,r) \le \|\mu\|_{T^1(K)}.
\]
Now, for a ball $B(z,r)$ containing $x$ as an interior point, the limit inside the integral is $1$. It remains to show that for $x$ on the boundary of $B(z,r)$, we have
\[
\liminf_{t \to 0^+} \frac{|B(z,r) \cap B(x,t)|}{|B(x,t)|} \gtrsim 1, \quad x \in \partial B(z,r), \quad z \in \rn, \quad r > 0.
\]

To this end, we appeal to John's ellipsoid of $K$, i.e., the ellipsoid $E$ with the property that $E \subset K \subset \sqrt{n}E$. Now, since the claim remains unchanged by an invertible linear transformation of $\rn$, we may assume that $E$ is the Euclidean unit ball of $\rn$. Furthermore, we may assume that $x = 0$, $r = 1$, and $z = (0, 0, 0, \dots, d)$. Now, since the closed convex hull of $z + E \cup \{x\}$ lies in $B(z,1)$, we conclude that the Euclidean truncated cone
$$C_d = \{(y', y_n) \in \rn : \|y'\|_E \le \frac{1}{d}y_n, \quad y_n \le d\},$$
lies in $B(z, 1)$. Therefore, since $B(z, 1) \subset z + \sqrt{n}E$, we get $d \le \sqrt{n}$ and thus 
$$C_n = \{(y', y_n) \in \rn : \|y'\|_E \le \frac{1}{\sqrt{n}}y_n, \quad y_n \le d\} \subset B(z, 1),$$
from which we conclude
\[
\frac{|B(z,1) \cap B(0,t)|}{|B(0,t)|} \ge \frac{|C_n \cap tE|}{|\sqrt{n}tE|} = \frac{1}{\sqrt{n}^n} \frac{|C_n \cap tE|}{|tE|}.
\]
Now, dilation implies that for $t < d$, the expression on the right-hand side is a constant that depends only on the dimension, and this completes the proof.
\end{proof}

As a simple corollary of the above lemma, we have
\begin{corollary}\label{almosteveryoverlap}
	For a finite collection of $K$-balls, $\mathcal{C}$, and any non-negative sequence $\{f_B\}_{B \in \mathcal{C}}$, we have
	\[
	\linftyrnn{\sum_{B \in \mathcal{C}} f_B 1_B} \simeq \max_{x \in \rn} \sum_{B \in \mathcal{C}} f_B 1_B(x).
	\]
	In particular,
	\[
	\textbf{O}(\mathcal{C}) \simeq \linftyrnn{\sum_{B \in \mathcal{C}}1_B}.
	\]
\end{corollary}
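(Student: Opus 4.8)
The plan is to reinterpret the finite sum $\sum_{B\in\mathcal{C}}f_B 1_B$ as the averaged-cone function $A_1(\mu)$ of a suitable point-mass measure, and then to read off the claim directly from Lemma \ref{supmax}, which already packages all the nontrivial geometry. Concretely, writing each $B\in\mathcal{C}$ as $B=B(y,t)$, I would set
\[
\mu := \sum_{B(y,t)\in\mathcal{C}} f_{B(y,t)}\,\delta_{(y,t)}.
\]
Since $f_B\ge 0$ and $\mathcal{C}$ is finite, $\mu$ is a non-negative Radon measure on $\upperhalf$, so Lemma \ref{supmax} is applicable to it.

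Next I would identify $A_1(\mu)$ with the function on both sides of the claim. Because $(y,t)\in\Gamma_x$ is equivalent to $\|x-y\|_K\le t$, i.e. to $x\in B(y,t)$, one gets
\[
A_1(\mu)(x) = |\mu|(\Gamma_x) = \sum_{B\in\mathcal{C}} f_B\,1_B(x).
\]
Hence $\max_{x\in\rn}\sum_{B\in\mathcal{C}} f_B 1_B(x) = \sup_{x}A_1(\mu)(x)$, while by the very definition of the $T^1(K)$-norm we have $\|\mu\|_{T^1(K)} = \linftyrnn{A_1(\mu)} = \linftyrnn{\sum_{B\in\mathcal{C}} f_B 1_B}$. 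Lemma \ref{supmax} gives $\sup_{x}A_1(\mu)(x)\simeq\|\mu\|_{T^1(K)}$, which is exactly the asserted equivalence.

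It is worth flagging where the content lies, since this also pinpoints the only real obstacle were one to argue from scratch. The inequality $\linftyrnn{\sum_{B\in\mathcal{C}} f_B 1_B}\le\max_{x}\sum_{B\in\mathcal{C}} f_B 1_B(x)$ is trivial, as an essential supremum never exceeds a pointwise supremum (and for a finite collection the latter is attained). The reverse bound is the substantive one: the maximizing point $x_0$ may sit on the boundaries of several balls $B(z,r)$, where the essential supremum could a priori be strictly smaller, so one must show that a fixed proportion of every small ball $B(x_0,t)$ still lies inside each $B(z,r)\ni x_0$. This is precisely the John-ellipsoid and truncated-cone estimate executed in the proof of Lemma \ref{supmax}, which is why invoking that lemma is the cleanest route.

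Finally, for the last assertion I would specialize to $f_B\equiv 1$. Then $\sum_{B\in\mathcal{C}} f_B 1_B = \sum_{B\in\mathcal{C}} 1_B$ and $\max_{x\in\rn}\sum_{B\in\mathcal{C}} 1_B(x)=\textbf{O}(\mathcal{C})$ by the definition of the total overlap, so the general equivalence yields $\textbf{O}(\mathcal{C})\simeq\linftyrnn{\sum_{B\in\mathcal{C}}1_B}$.
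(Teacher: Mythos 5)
Your proof is correct and is exactly the paper's intended argument: the paper states this as ``a simple corollary'' of Lemma \ref{supmax}, having already set up, right before that lemma, the identification of $\sum_{B\in\mathcal{C}} f_B 1_B(x)$ with $A_1(\mu)(x)$ for the point-mass measure $\mu=\sum_{B(y,t)\in\mathcal{C}} f_{B(y,t)}\delta_{(y,t)}$. Your specialization $f_B\equiv 1$ for the total-overlap statement is likewise the intended reading.
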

\begin{remark}
The above corollary implies that the total overlap of a finite collection of closed $K$-balls is comparable to the total overlap of their interiors. Therefore, by Lemma \ref{sequenceoverlap}, there exists a dimensional constant \( c(n) \) such that any finite collection of $K$-balls with disjoint interiors can be partitioned into at most \( c(n) \) sub-collections whose closures are disjoint. Consequently, replacing closed $K$-balls with open ones in the definition of \( S^p(K) \) results in a comparable norm.
\end{remark}

Now, as the final ingredient, we bring the following lemma.

\begin{lemma}\label{gammadelta}
Let $0<\delta<\frac{a}{2}$, $x\in\rn$ and 
\begin{equation}\label{gammadeltadef}
	\Gamma_{x,a,\delta}=\left\{(y,s)\in\upperhalf: \|y-x\|_K\le\delta+ s,\quad a\le s \right\}.
\end{equation}
Then for any non-negative measure $\mu \in T^1(K)$ we have
\[
\mu(\Gamma_{x,a,\delta})\lesssim \|\mu\|_{T^1(K)}.
\]	
\end{lemma}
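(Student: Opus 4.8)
The plan is to reduce the estimate to a purely geometric covering statement: namely, that the fattened, truncated cone $\Gamma_{x,a,\delta}$ is contained in the union of a dimensionally bounded number of ordinary cones $\Gamma_z$ with vertices $z \in \rn$. Once this is in hand, the subadditivity of the non-negative measure $\mu$, together with Lemma \ref{supmax}, finishes the argument immediately. To set this up, I would write $u = y-x$, so that $(y,s)\in\Gamma_{x,a,\delta}$ exactly when $\|u\|_K \le s+\delta$ and $s\ge a$. If $\|u\|_K \le s$ then already $(y,s)\in\Gamma_x$, so the only points requiring attention are those in the thin ``shell'' $s < \|u\|_K \le s+\delta$; here the hypothesis $s\ge a > 2\delta$ guarantees the fattening is small relative to the height.

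To cover the shell I would displace the vertex slightly in the direction of $u$. Fix $\eta = \tfrac14$ and let $\{\omega_j\}_{j=1}^N$ be an $\eta$-net of the unit sphere $\partial B_K(0,1)$ in the $K$-norm; by the standard volumetric estimate $N \le (1+2/\eta)^n$ is a dimensional constant. Set $v_j = 2\delta\,\omega_j$ and take the vertices $x+v_j$. Given $(y,s)$ in the shell, write $u = \rho\omega$ with $\rho = \|u\|_K \in (s,s+\delta]$ and $\omega\in\partial B_K(0,1)$, and pick $\omega_j$ with $\|\omega-\omega_j\|_K \le \eta$. Then, using $\|\omega\|_K = 1$ and $\rho > s \ge a > 2\delta$,
\[
\|u - v_j\|_K \le |\rho - 2\delta| + 2\delta\|\omega-\omega_j\|_K \le (\rho - 2\delta) + 2\delta\eta \le (s-\delta) + 2\delta\eta = s - \delta(1-2\eta) < s,
\]
so that $\|y-(x+v_j)\|_K \le s$, i.e. $(y,s)\in\Gamma_{x+v_j}$. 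This yields the covering
\[
\Gamma_{x,a,\delta} \subset \Gamma_x \cup \bigcup_{j=1}^{N} \Gamma_{x+v_j},
\]
a union of $N+1 \lesssim 1$ ordinary cones, the vertices of which are fixed independently of $s$ thanks to the uniform lower bound $s\ge a$.

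Finally, since $\mu\ge 0$ is subadditive on this finite union,
\[
\mu(\Gamma_{x,a,\delta}) \le \mu(\Gamma_x) + \sum_{j=1}^{N} \mu(\Gamma_{x+v_j}) = A_1(\mu)(x) + \sum_{j=1}^{N} A_1(\mu)(x+v_j) \le (N+1)\sup_{z\in\rn} A_1(\mu)(z),
\]
and Lemma \ref{supmax} bounds the right-hand side by a dimensional multiple of $\|\mu\|_{T^1(K)}$, which is the claim. I expect the main obstacle to be the covering step, specifically ensuring that the perturbed vertices can be drawn from a single finite set valid simultaneously for all admissible heights $s\ge a$. This is precisely where the hypothesis $\delta < a/2$ is used: it keeps the radius $2\delta$ of the net of vertices strictly below every admissible height and produces the strict slack $\delta(1-2\eta)>0$ needed to absorb the fattening. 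A secondary subtlety is that $A_1(\mu)$ must be controlled \emph{pointwise} rather than merely in $L^\infty$, and this is exactly what Lemma \ref{supmax} provides.
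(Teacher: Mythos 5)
Your proof is correct, and its overall architecture --- reduce the estimate to covering $\Gamma_{x,a,\delta}$ by a dimensionally bounded family of genuine cones $\Gamma_z$, then use subadditivity of $\mu$ and Lemma \ref{supmax} to pass from pointwise values of $A_1(\mu)$ to $\|\mu\|_{T^1(K)}$ --- is exactly that of the paper. The difference is in how the covering is constructed. The paper covers the whole truncated cone in one stroke: after normalizing $x=0$, it takes a maximal $\frac{a}{2}$-separated set $\{x_i\}$ in $B(0,a)$ (at most $3^n$ points) and shows, by radially rescaling $y$ to $y'=\frac{a}{s+\delta}\,y\in B(0,a)$, that every $(y,s)\in\Gamma_{0,a,\delta}$ lies in some $\Gamma_{x_i}$; its vertices live at scale $a$. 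You instead split off the part already inside $\Gamma_x$ and cover only the residual shell $s<\|y-x\|_K\le s+\delta$, displacing the vertex by $2\delta$ in directions drawn from a $\frac{1}{4}$-net of the $K$-unit sphere; your vertices live at scale $\delta$, and you get on the order of $9^n+1$ cones instead of $3^n$. Both constructions are elementary volumetric arguments, both invoke the hypothesis $\delta<\frac{a}{2}$ at the analogous spot (in your case to drop the absolute value in $|\rho-2\delta|$ and retain the slack $\delta(1-2\eta)$; in the paper's case to absorb $\frac{a}{2}+\delta-a<0$), and both finish identically via Lemma \ref{supmax}. Your decomposition makes more transparent which portion of the set is genuinely problematic (the thin shell) and exactly where the hypothesis enters, at the price of a slightly larger (still dimensional) constant; the paper's rescaling trick is a bit slicker and yields a cleaner count of cones.
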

\begin{proof}
It is enough to show that for at most $3^n$ points $\{x_i: 1\le i\le 3^n\}$, we have
\begin{equation}\label{conecover}
	\Gamma_{x,a,\delta}\subset \bigcup_{1\le i\le 3^n} \Gamma_{x_i},
\end{equation}
which, together with Lemma \ref{supmax}, proves the claim. Now, to see why \eqref{conecover} holds, without loss of generality, assume $x=0$, and let $\{x_i\}$ be a maximal set of $\frac{a}{2}$-separated points in $B(0,a)$, i.e., a maximal set of points with $\|x_i-x_j\|_K\ge \frac{a}{2}$. Note that the cardinality of such a set does not exceed $3^n$, and for $(y,s)\in \Gamma_{0,a,\delta}$, we have $y'=a(s+\delta)^{-1}y\in B(0,a)$. So, for at least one $x_i$, we must have $\|y'-x_i\|_K<\frac{a}{2}$, which implies that
\[
\|y-x_i\|_K\le \|y'-x_i\|_K + \|y'-y\|_K< \frac{a}{2}+(1-\frac{a}{s+\delta})(s+\delta)< s,
\]
which means that $(y,s)\in \Gamma_{x_i}$, and this proves \eqref{conecover}.

\end{proof}
\begin{remark}
It is worth noting that in the above proof, we actually covered \(\Gamma_{x,a,\delta}\) with the interior of the cones \(\Gamma_{x_i}\). This implies that if, in the definition of the \(T^1(K)\) norm, we replace the closed cones with their interiors, we obtain an equivalent norm. The reason is that by keeping \(a > 0\) fixed and letting \(\delta\) tend to \(0\), we obtain a bound for  
\[
\mu\left\{(y,s)\in\upperhalf: \|y-x\|\le s,\quad 0<a\le s \right\}
\]
using this seemingly weaker norm. Then, letting \(a\) tend to \(0\) establishes the equivalence.
	
\end{remark}
Now, we prove Theorem \ref{duality1}.
\begin{proof}[Proof of Theorem \ref{duality1}] 
First, we show that every $\mu\in T^1(K)$ defines a bounded linear functional on $S^1(K)$. So, let $\mu$ be a nonnegative measure in $T^1(K)$ and $f$ a non-negative function in $S^1(K)$. Then, we have to show that  
\begin{equation}\label{thefirstdirection}
	\int_{\upperhalf}fd\mu\lesssim \|f\|_{S^1(K)}\|\mu\|_{T^1(K)}.
\end{equation}

To do this, let $a>0$ and consider the region $F=[-a^{-1},a^{-1}]^n\times [a,a^{-1}]$. Then, let $\varepsilon>0$, and since $f$ is continuous, choose $0<\delta<4^{-1}a$ such that  
\[
|f(x,t)-f(y,s)| \le\varepsilon, \quad \text{for} \quad \|x-y\|+|t-s|\le\delta, \quad \text{with} \quad (x,t),(y,s)\in F.
\]
Next, partition $F$ into disjoint rectangular pieces $R$ for which $$\|x-y\|+|t-s|\le\delta \quad \text{when} \quad (x,t),(y,s)\in R,$$  
and from each piece $R$, choose an arbitrary point $(x_R,t_R)$. Now, we have  
\begin{align}
	\int_{F}fd\mu=\sum_{R} \int_{R}fd\mu&\le\sum_{R} f(x_R,t_R)\mu(R)+\sum_{R}\int_{R}|f-f(x_R,t_R)|d\mu\\
	&\le\sum_{R} f(x_R,t_R)\mu(R)+\varepsilon \mu(F).\label{firstestimate}
\end{align}
Next, let $\mathcal{C}=\{B(x_R,t_R)\}$, and note that Lemma \ref{descreetduality} implies that  
\begin{equation}\label{the second}
	\sum_{R} f(x_R,t_R)\mu(R)\lesssim \|f\|_{S^1(K)}
	\max_{x\in \rn}\sum_{R}\mu(R)1_{B(x_R,t_R)}(x).
\end{equation}

Also, for $x\in\rn$ we have  
\[
\sum_{R}\mu(R)1_{B(x_R,t_R)}(x)=\sum_{x\in B(x_R,t_R)}\mu(R),
\]
which, after recalling that the sets $R$ are disjoint and  
\[
\|y-x\|\le \|y-x_R\|+\|x_R-x\|\le\delta+t_R\le 2\delta+ s, \quad (y,s)\in R,
\]
gives us  
\begin{equation}\label{thethird}
	\sum_{R}\mu(R)1_{B(x_R,t_R)}(x)=\sum_{x\in B(x_R,t_R)}\mu(R)\le \mu (\Gamma_{x,a,2\delta}),
\end{equation}
where $\Gamma_{x,a,2\delta}$ is as in \eqref{gammadeltadef}. So, from \eqref{firstestimate}, \eqref{the second}, \eqref{thethird}, and Lemma \ref{gammadelta}, we get  
\begin{equation*}
	\int_{F}fd\mu\lesssim \|f\|_{S^1(K)}\|\mu\|_{T^1(K)}+\varepsilon \mu(F),
\end{equation*}
which, after letting $\varepsilon\to 0$ and then $a\to0$, proves \eqref{thefirstdirection}.\\

Now, to finish the first direction, note that for an arbitrary $\mu\in T^1(K)$ and $f\in S^1(K)$, we have that $|\mu|\in T^1(K)$ and $|f|\in S^1(K)$, with norms remaining unchanged. Therefore, the integral  
\[
\int_{\upperhalf}fd\mu
\]
is absolutely convergent, and the triangle inequality implies  
\[
\left|\int_{\upperhalf}fd\mu\right|\le\int_{\upperhalf}|f|d|\mu|\lesssim \|f\|_{S^1(K)}\|\mu\|_{T^1(K)},
\]
which completes the proof of this direction.\\  

For the second direction, we have to show that any bounded linear functional $l$ on $S_0^1(K)$ comes from a unique measure $\mu$ in $T^1(K)$, i.e.,  
\begin{equation}
	l(f)=\int_{\upperhalf}fd\mu, \quad f\in S_0^1(K).
\end{equation}

To this aim, note that for every compact subset $F\subset \upperhalf$, there exists a constant $c(F)$ such that for any continuous function $f$ supported in $F$, we have  
\[
|l(f)|\le \|l\|_{S_0^1(K)^*}\|f\|_{S_0^1(K)}\le  c(F)\|l\|_{S_0^1(K)^*} \sup_{x\in F}|f(x)|.
\]
To see this, let $s$ and $r$ be such that  
\[
\{t:(x,t)\in F\}\subset [0,s], \quad \{x: (x,t)\in F\}\subset B(0,r).
\]
Then, any disjoint collection of $K$-balls $B(x,t)$ with $(x,t)\in F$ lies entirely in $B(0,r+s)$, and therefore, the above inequality holds with $c(F)=|B(0,r+s)|$. Now, the Riesz-Markov-Kakutani representation theorem implies that there exists a Radon measure $\mu$ such that  
\begin{equation}\label{functionaldef}
	l(f)=\int_{\upperhalf}fd\mu, \quad f\in C_c(\upperhalf).
\end{equation}
Now, the above identity implies that $\mu$ is unique, and it remains to show that $\mu\in T^1(K)$. To do this, consider functions of the form  
\begin{equation}\label{loneatom}
	f(x,t):=a(x,t)\int_{B(x,t)}g(y)dy, \quad a\in C_c(\upperhalf), \quad |a|\le 1, \quad \lonernn{g}=1,
\end{equation}
which are continuous, compactly supported, belong to $S_0^1(K)$, and have norm at most $1$. Therefore, from Fubini, we must have  
\[
l(f)=\int_{\upperhalf}f(x,t)d\mu(x,t)=\int_{\rn}g(y)\int_{\Gamma_y}a(x,t)d\mu(x,t),
\]
which implies that  
\[
\left|\int_{\rn}g(y)\int_{\Gamma_y}a(x,t)d\mu(x,t)\right|\le \|l\|_{S_0^1(K)^*},
\]
and since this holds for an arbitrary $g$ in the unit ball of $\lonern$, we obtain  
\[
\left|\int_{\Gamma_y}a(x,t)d\mu(x,t)\right|\le \|l\|_{S_0^1(K)^*} \quad \text{a.e.} \quad y\in\rn.
\]

Here, the negligible set depends on $\mu$ and $a$. To eliminate this dependence on $a$, note that the space of continuous functions with compact support is separable. Thus, after discarding a set of measure zero, the above inequality still holds for a countable dense subset of the unit ball of $C_c(\upperhalf)$. By continuity of the integral, it then extends to all functions in the unit ball of $C_c(\upperhalf)$. Now, for each $y$ where this holds, we obtain  
\[
|\mu|(\Gamma_y)\le \|l\|_{S_0^1(K)^*},
\]
which implies that $\|\mu\|_{T^1(K)}\le \|l\|_{S_0^1(K)^*}$. This, combined with \eqref{functionaldef} and the first direction, yields $\|l\|_{S_0^1(K)^*}\simeq \|\mu\|_{T^1(K)}$, completing the proof.

\end{proof}
Now, we proceed to prove Theorem \ref{preduals}. To this end, we recall the following general atomic decomposition theorem, which can be found in \cite{MR1225511}.

\begin{theorem}\label{atomiclemma}
	Let $V \subset X$ be a bounded subset of a Banach space $X$, symmetric around the origin, and such that for every $l \in X^*$,
	\[
	\|l\|_{X^*} \simeq \sup_{x \in V} |l(x)|.
	\]
	Then, for every $x \in X$, there exists a sequence of positive real numbers $\{\lambda_i\}_{i \geq 1}$ such that
	\[
	x = \sum_{i \geq 1} \lambda_i v_i, \quad \|x\|_X \simeq \sum_{i \geq 1} \lambda_i, \quad v_i \in V, \quad i \geq 1.
	\]
	In the above, the series converges in the norm topology of $X$.
\end{theorem}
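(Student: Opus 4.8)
The plan is to first convert the analytic hypothesis $\|l\|_{X^*}\simeq\sup_{x\in V}|l(x)|$ into a geometric statement about the convex hull of $V$, and then run a greedy peeling scheme. Write $W=\overline{\mathrm{conv}}(V)$ for the norm-closed convex hull of $V$. Since $V$ is symmetric, for every $l\in X^*$ we have $\sup_{x\in V}|l(x)|=\sup_{w\in W}l(w)$, so the lower half of the hypothesis yields a constant $m>0$ with $\sup_{w\in W}l(w)\ge m\|l\|_{X^*}$ for all $l$. I claim this forces
\[
mB_X\subseteq W.
\]
Indeed, if some $x$ with $\|x\|_X<m$ were not in $W$, then, $W$ being closed and convex, the Hahn--Banach separation theorem would give $l\in X^*$ with $l(x)>\sup_{w\in W}l(w)\ge m\|l\|_{X^*}$, contradicting $l(x)\le\|l\|_{X^*}\|x\|_X<m\|l\|_{X^*}$. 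The boundedness of $V$ (equivalently, the upper half of the hypothesis) supplies the companion constant $R:=\sup_{v\in V}\|v\|_X<\infty$, which is needed only for the reverse norm estimate.

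With $mB_X\subseteq W$ in hand, the heart of the argument is an iterative approximation. Fix $\theta\in(0,1)$. For any $x\neq 0$, the vector $mx/\|x\|_X$ lies in $W=\overline{\mathrm{conv}}(V)$, so it is approximable to within $m\theta$ by a finite convex combination of elements of $V$; rescaling by $\|x\|_X/m$ produces a finite nonnegative combination $y=\sum_j c_j w_j$ with $w_j\in V$, coefficient mass $\sum_j c_j\le\|x\|_X/m$, and $\|x-y\|_X\le\theta\|x\|_X$. I then apply this to successive remainders: set $r_0=x$ and, given $r_{i-1}$, extract such a $y_i$ with coefficient mass at most $\|r_{i-1}\|_X/m$ and put $r_i=r_{i-1}-y_i$, so that $\|r_i\|_X\le\theta\|r_{i-1}\|_X$. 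Inductively $\|r_i\|_X\le\theta^i\|x\|_X$, while the coefficient mass of $y_i$ is at most $\theta^{i-1}\|x\|_X/m$.

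Assembling the pieces gives $x=\sum_{i\ge1}y_i$, with convergence in norm because $\|x-\sum_{i\le N}y_i\|_X=\|r_N\|_X\le\theta^N\|x\|_X\to0$. Relabeling the finitely many terms of each $y_i$ as a single sequence $\{\lambda_i v_i\}_{i\ge1}$ with $\lambda_i>0$ and $v_i\in V$, the total mass obeys
\[
\sum_{i\ge1}\lambda_i\le\sum_{i\ge1}\frac{\theta^{i-1}\|x\|_X}{m}=\frac{\|x\|_X}{m(1-\theta)}\lesssim\|x\|_X,
\]
which is one half of $\|x\|_X\simeq\sum_i\lambda_i$; the reverse inequality is immediate from the triangle inequality and boundedness of $V$, since $\|x\|_X\le\sum_i\lambda_i\|v_i\|_X\le R\sum_i\lambda_i$. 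The only delicate step is the passage from the scalar norm comparison to the inclusion $mB_X\subseteq W$, where convexity, symmetry, and closedness of $W$ all enter through Hahn--Banach separation (in particular, symmetry is what replaces $|l(x)|$ by the one-sided support functional $l(w)$). I expect this to demand the most care, while everything downstream is a routine geometric-series estimate.
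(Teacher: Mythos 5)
Your proof is correct. Note that the paper does not prove Theorem \ref{atomiclemma} at all; it only cites Coifman--Lions--Meyer--Semmes \cite{MR1225511}, and your argument is essentially the classical one from that reference: use symmetry to replace $\sup_{v\in V}|l(v)|$ by the support functional of $W=\overline{\mathrm{conv}}(V)$, apply Hahn--Banach separation to convert the lower norm bound into the ball inclusion $mB_X\subseteq W$, and then run the greedy geometric-series iteration, with boundedness of $V$ supplying the reverse inequality $\|x\|_X\le R\sum_i\lambda_i$. The only points you leave implicit are trivial: if some remainder $r_{i-1}$ vanishes, the process terminates with a finite decomposition, and the case $x=0$ is degenerate in the statement itself; also, absolute convergence ($\sum_i\lambda_i\|v_i\|_X\le R\sum_i\lambda_i<\infty$) is what justifies relabeling the doubly indexed sum into a single series without affecting its sum. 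None of these affects correctness.
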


\begin{proof}[Proof of Theorem \ref{preduals}]
Let \( V^{p'}(K) \) be the space of all functions \( f \in L^{p'}_{\text{loc}}(\upperhalf) \), for which
\[
\|f\|_{V^{p'}(K)} := \sup_{\|g\|_{T^p(K)} \le 1} \int_{\upperhalf} |f(x,t) g(x,t)| \, dx \, dt < \infty.
\]
The space \( V^{p'}(K) \) is a linear space, and the above quantity defines a norm on it. Our first aim is to show that \( V^{p'}(K) \) is a Banach space, and to this aim, let \( O \) be a compact region with non-empty interior in \( \upperhalf \) and note that \( O \) is covered by finitely many cones \( \Gamma_x \) with \( x \in \rn \). Therefore, for any function \( g \) supported in \( O \), we have
\[
\|g\|_{T^p(K)} \le \|g\|_{L^p(O)} \le c(O) \|g\|_{T^p(K)}.
\]
From this, we get
\begin{equation}\label{localinclusion}
	\|f\|_{L^{p'}(O)} \le c(O) \|f\|_{V^{p'}(K)},
\end{equation}
for all \( f \) with support in \( O \). This shows that \( V^{p'}(K) \) is embedded in \( L^{p'}_{\text{loc}}(\upperhalf) \), and thus every Cauchy sequence \( \{f_i\} \) in \( V^{p'}(K) \) converges to a function \( f \in L^{p'}_{\text{loc}}(\upperhalf) \). Now, let \( \|g\|_{T^p(K)} \le 1 \), and note that Fatou's lemma implies
\[
\int_{\upperhalf} |f g| \le \liminf_{i \to \infty} \int_{\upperhalf} |f_i g| \, dx \, dt \le \sup_i \|f_i\|_{V^{p'}(K)},
\]
which implies that \( f \in V^{p'}(K) \). Another application of Fatou's lemma gives
\[
\int_{\upperhalf} |f_l - f| |g| \le \liminf_{i \to \infty} \int_{\upperhalf} |f_l - f_i| |g| \, dx \, dt \le \liminf_{i \to \infty} \|f_l - f_i\|_{V^{p'}(K)},
\]
which, after taking the supremum over \( g \) and letting \( l \to \infty \), shows that
\[
\lim_{l \to \infty} \|f_l - f\|_{V^{p'}(K)} = 0,
\]
proving that \( V^{p'}(K) \) is complete.\\

Now, let \( U^{p'}(K) \) be the closure of \( C_c(\upperhalf) \) in \( V^{p'}(K) \), which is a Banach space, and we will show that \( T^p(K) \) is its dual. To do this, we note that any \( g \in T^p(K) \) gives the bounded linear functional
\[
l_g(f) := \int_{\upperhalf} f(x,t) g(x,t) \, dx \, dt,
\]
on \( U^{p'}(K) \) with \( \|l_g\|_{U^{p'}(K)^*} \le \|g\|_{T^p(K)} \), and it remains to show that for any continuous linear functional \( l \) on \( U^{p'}(K) \), there exists a unique function \( g \) such that \( l = l_g \), with \( \|g\|_{T^p(K)} \le \|l\|_{U^{p'}(K)^*} \). In doing so, we follow the same line of argument as in the previous proof, i.e., from \eqref{localinclusion} we get that \( l \) defines a bounded linear functional on \( L^{p'}(O) \) for any compact set with non-empty interior in \( \upperhalf \). This implies that there exists a function \( g \in L^p_{\text{loc}}(\upperhalf) \) such that
\[
l(f) = \int_{\upperhalf} f(x,t) g(x,t) \, dx \, dt, \quad f \in C_c(\upperhalf),
\]
which in particular shows that \( g \) is unique. Then, let
\begin{equation}\label{lpatoms}
	f(x,t) = a(x,t) \int_{B(x,t)} h(y) \, dy, \quad a \in C_c(\upperhalf), \quad \|a\|_{L^{p'}(\upperhalf)} \le 1, \quad \|h\|_{L^1(\rn)} = 1,
\end{equation}
and note that \( \|f\|_{U^{p'}(K)} \le 1 \). To see this, take \( b \) in the unit ball of \( T^p(K) \), and note that from Fubini we have
\[
\int_{\upperhalf} \left| \int_{B(x,t)} h(y) \, dy \, a(x,t) \right| |b(x,t)| \, dx \, dt \le \int_{\rn} |h(y)| \int_{\Gamma_y} |a(x,t)| |b(x,t)| \, dx \, dt,
\]
which, after using Hölder's inequality, gives us
\[
\int_{\upperhalf} |f b| \le 1,
\]
as desired. Now, a similar computation shows
\[
l(f) = \int_{\upperhalf} h(y) \int_{\Gamma_y} a(x,t) g(x,t) \, dx \, dt \le \|l\|_{U^{p'}(K)^*},
\]
and therefore,
\[
\left| \int_{\Gamma_y} a(x,t) g(x,t) \, dx \, dt \right| \le \|l\|_{U^{p'}(K)^*}, \quad \text{a.e.} \quad y\in\rn.
\]
This time we use the fact that \( L^{p'}(\upperhalf) \) is separable, and thus, after neglecting a null set and using continuity, we get
\[
\|g\|_{L^p(\Gamma_y)} \le \|l\|_{U^{p'}(K)^*}, \quad \text{a.e.} \quad y \in \rn,
\]
which means that \( \|g\|_{T^p(K)} \le \|l\|_{U^{p'}(K)^*} \), and this completes the proof of this direction.\\

Finally, to prove the claimed decomposition, we appeal to Theorem \ref{atomiclemma}, and note that the two sets of functions defined in \eqref{lpatoms} and \eqref{loneatom} satisfy the conditions of \( V \), and therefore the decompositions follow. The proof is now complete.
\end{proof}

\section{Dyadic Analogues}
As mentioned in the introduction, our aim in this section is to characterize the dual \( S^p(K) \) spaces in the dyadic setting. Here, for simplicity, we restrict ourselves to dyadic intervals on \( [0,1) \), and for \( 1 \le p < \infty \), we let \( X^{p,\infty} \) be the space of all sequences of numbers \( f = \{ f_I \}_{I \in \mathcal{D}} \), indexed by half-open dyadic intervals \( \mathcal{D} \) in \( [0,1) \), such that
\[
\|f\|_{X^{p,\infty}} := \big( \sup_{\mathcal{C}} \sum_{I \in \mathcal{C}} |f_I|^p \big)^{\frac{1}{p}} < \infty,
\]
where, as usual, the above supremum is taken over all finite disjoint collections of dyadic intervals in \( [0,1) \). Quite similarly to the continuous case, \( X^{p,\infty} \) is a Banach space containing all sequences of finite support. Here, we take advantage of the structure of dyadic intervals and give another interpretation of the above norm. For each \( j \ge 0 \), let \( \mathcal{G}_j \) be the \( j \)-th generation of dyadic intervals, i.e., dyadic intervals with length \( 2^{-j} \). Then, suppose \( f = f^{i} \) is supported on \( \mathcal{D}_i = \cup_{1 \le j \le i} \mathcal{G}_j \), and note that to compute the above norm, we may follow a greedy algorithm from bottom to top. More precisely, for each \( I \in \mathcal{G}_{i-1} \) with its two children \( I_l \) and \( I_r \) in \( \mathcal{G}_i \), replace \( f_I \) with
\[
\max\left\{ \left( |f_{I_r}|^p + |f_{I_l}|^p \right)^{\frac{1}{p}}, |f_I| \right\},
\]
and set \( f_{I_l} \) and \( f_{I_r} \) to 0. Then, we get a new sequence \( f^{i-1} \) which is supported on \( \mathcal{D}_{i-1} \), and with the property that \( \| f^{i-1} \|_{X^{p,\infty}} = \| f^{i} \|_{X^{p,\infty}} \). Next, repeat the process with \( f^{i-1} \) and get \( f^{i-2} \), and so on. The process terminates at the \( i \)-th step, where we have a constant \( f^0 = f^0_{[0,1)} \), which is exactly \( \| f \|_{X^{p,\infty}} \).\\

The above procedure has a natural generalization to a two-parameter scale of spaces, \(X^{p,q}\), which contains the dual of \(X_0^{p,\infty}\), the closure of finitely supported sequences in \(X^{p,\infty}\). To this aim, let \(1 \le p,q \le \infty\), and \(g = g^{i}\) be supported on \(\mathcal{D}_i\), then for each \(I \in \mathcal{G}_{i-1}\) replace \(g_I\) with
\[
((g_{I_r}, g_{I_l})_{p}, g_I)_{q} = \left( (|g_{I_r}|^{p} + |g_{I_l}|^{p})^{\frac{q}{p}} + |g_I|^q \right)^{\frac{1}{q}},
\]
and \(g_{I_r}, g_{I_l}\) with 0 to get \(g^{i-1}\). Here, we use the standard convention that for \(p = \infty\) or \(q = \infty\) we take the maximum. Then, continuing this process results in a constant, \(g^{0} = g^0_{[0,1)}\), at the \(i\)-th step which we denote as \(\|g\|_{X^{p,q}}\). To describe this norm for an arbitrary sequence \(g\), we set 
\begin{equation}\label{gidef}
	g_{i,I} := g_I, \quad I \in \mathcal{D}_i, \quad g_{i,I} := 0, \quad I \notin \mathcal{D}_i, \quad i \ge 0,
\end{equation}
and then
\begin{equation}\label{normofg}
	\|g\|_{X^{p,q}} := \sup_{i \ge 0} \|g_i\|_{X^{p,q}}.
\end{equation}
Now, we define the space \(X^{p,q}\) as the space of all sequences $g = \{g_I\}_{I \in \mathcal{D}}$, for which the above quantity is finite. This scale of spaces contains the dyadic analogue of \(T^q(K)\) as well. Since when \(p = \infty\) and \(1 \le q < \infty\), we have
\begin{equation}\label{pointincomom}
	\|g\|_{X^{\infty, q}} = \sup_{x \in [0,1]} \left( \sum_{I\in\mathcal{D}} |g_I|^q 1_I(x) \right)^{\frac{1}{q}},
\end{equation}
which is the dyadic analogue of \(T^q(K)\)-norm.\\

Before we proceed any further, let us show that this quantity is a norm, which makes \(X^{p,q}\) into a Banach space. First, observe that the above quantity is homogeneous and positive definite, and it remains to show that the triangle inequality holds as well. The first thing to note is that it is enough to prove the claim for sequences with finite support and non-negative coefficients. Then, as the induction assumption, suppose the triangle inequality holds for sequences supported on \(\mathcal{D}_{i-1}\), and now take \(f\) and \(g\) with support on \(\mathcal{D}_i\). Then, note that for any two sequences \(a\) and \(b\) supported on \(\mathcal{D}_i\), we have
\[
\|a\|_{X^{p,q}} = \|a^{i-1}\|_{X^{p,q}}, \quad \|a\|_{X^{p,q}} \le \|b\|_{X^{p,q}} \quad \text{if} \quad |a_I| \le |b_I|, \quad I \in \mathcal{D}.
\]

Now, since
\[
(f+g)_I^{i-1} \le f_I^{i-1} + g_I^{i-1}, \quad I \in \mathcal{G}_{i-1},
\]
the above two observations imply
\[
\|f+g\|_{X^{p,q}} = \|(f+g)^{i-1}\|_{X^{p,q}} \le \|f^{i-1}\|_{X^{p,q}} + \|g^{i-1}\|_{X^{p,q}} = \|f\|_{X^{p,q}} + \|g\|_{X^{p,q}},
\]
which is the desired inequality. The completeness follows easily from
\[
|f_I| \le \|f\|_{X^{p,q}}, \quad I \in \mathcal{D},
\]
and a standard argument, which we leave to the interested reader.\\

In the next theorem, we give a characterization of the dual of \(X_0^{p,q}\).
\begin{theorem}\label{xdual}
	Let \(X_0^{p,q}\) be the closure of the space of sequences with finite support in \(X^{p,q}\). Then we have
	\begin{align}\label{unify}
		&(X_0^{p,q})^* = X^{p',q'}, \quad 1 \le p, q \le \infty,
	\end{align}
	with the natural pairing 
	\[
	\left\langle f, g \right\rangle := \sum_{I \in \mathcal{D}} f_I g_I.
	\]

\end{theorem}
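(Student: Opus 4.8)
The plan is to reduce the whole duality to a single local computation at one dyadic node, propagate it up the tree by induction on depth, and then pass to the limit using the defining formula \eqref{normofg}. Write $N_I^{p,q}(g)$ for the value the bottom-up folding process assigns to the node $I$ after it has processed the subtree rooted at $I$, so that $\|g\|_{X^{p,q}} = N_{[0,1)}^{p,q}(g)$ for any $g$ supported on a finite subtree. The engine of the argument is the duality of the mixed norm hidden in the folding rule: for nonnegative $s,t,u$ put
\[
\Phi(s,t,u) := \big( (s^p + t^p)^{q/p} + u^q \big)^{1/q},
\]
which is precisely the norm $\|(\,\|(s,t)\|_{\ell^p},\,u)\|_{\ell^q}$ of the vector grouped as $\{s,t\}$ and $\{u\}$. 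Invoking the standard finite-dimensional mixed-norm duality $(\ell^q(\ell^p))^*=\ell^{q'}(\ell^{p'})$ (valid for all $1\le p,q\le\infty$ with the usual conventions at $\infty$, since everything here is finite-dimensional), I would obtain
\[
\Phi^*(\alpha,\beta,\gamma) = \sup_{\Phi(s,t,u)\le 1}\big(s\alpha+t\beta+u\gamma\big) = \big((\alpha^{p'}+\beta^{p'})^{q'/p'}+\gamma^{q'}\big)^{1/q'},
\]
which is exactly the $X^{p',q'}$ folding rule applied to $(\alpha,\beta,\gamma)$.

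Next I would prove the finite-tree duality: for sequences supported on a finite subtree rooted at a node $I$ with children $I_l,I_r$,
\[
N_I^{p',q'}(f) = \sup\Big\{ \textstyle\sum_{J\subseteq I} f_J g_J : N_I^{p,q}(g)\le 1\Big\},
\]
by induction on the depth of the subtree. The base case of a single leaf is immediate. For the inductive step, split the pairing as $f_I g_I$ plus the contributions of the two child subtrees. The key observation is that these three pieces decouple: the quantity $s=N_{I_l}^{p,q}(g)$ depends only on $g$ restricted to the left subtree, and similarly $t=N_{I_r}^{p,q}(g)$ and $u=|g_I|$, so the constraint $N_I^{p,q}(g)\le 1$ reads $(s^p+t^p)^{q/p}+u^q\le 1$ with the three budgets free. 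Choosing the sign of $g_I$ optimally gives $|f_I|u$, while by homogeneity and the induction hypothesis the best left and right contributions under budgets $s,t$ are $s\,N_{I_l}^{p',q'}(f)$ and $t\,N_{I_r}^{p',q'}(f)$. Hence the supremum equals $\Phi^*\big(N_{I_l}^{p',q'}(f),N_{I_r}^{p',q'}(f),|f_I|\big)$, which by the local duality above is $N_I^{p',q'}(f)$. Applied at the root and at each truncation level this shows that, restricted to $\mathcal{D}_N$, the norms $\|\cdot\|_{X^{p,q}}$ and $\|\cdot\|_{X^{p',q'}}$ are dual to each other under the pairing.

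Finally I would pass to the full spaces. For one inclusion, given $g\in X^{p',q'}$ and $f$ supported on some $\mathcal{D}_N$, the finite-tree duality together with \eqref{normofg} yields $|\sum_I f_I g_I|\le \|g_N\|_{X^{p',q'}}\|f\|_{X^{p,q}}\le \|g\|_{X^{p',q'}}\|f\|_{X^{p,q}}$, so by density $g$ defines a functional on $X_0^{p,q}$ of norm at most $\|g\|_{X^{p',q'}}$. Conversely, given $l\in(X_0^{p,q})^*$, set $g_I:=l(e_I)$, where $e_I$ is the sequence equal to $1$ at $I$ and $0$ elsewhere; applying the finite-tree duality at level $N$ gives $\|g_N\|_{X^{p',q'}}=\sup\{l(f):\operatorname{supp}f\subseteq\mathcal{D}_N,\ \|f\|_{X^{p,q}}\le 1\}\le\|l\|$, and taking $\sup_N$ shows $g\in X^{p',q'}$ with $\|g\|_{X^{p',q'}}\le\|l\|$; uniqueness is immediate since $g_I=l(e_I)$. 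The one remaining point is the convergence of $\sum_I f_I g_I$ for general $f\in X_0^{p,q}$: using the monotonicity $\|a\|_{X^{p,q}}\le\|b\|_{X^{p,q}}$ whenever $|a_I|\le|b_I|$ (so that zeroing out coefficients is a contraction) and density of finitely supported sequences, the partial sums over $\mathcal{D}_N$ are Cauchy, so the pairing is well defined and represents $l$. I expect the inductive step—precisely, justifying the clean decoupling of the two child subtrees and the reduction to $\Phi^*$—to be the main point requiring care, with the limiting argument then routine.
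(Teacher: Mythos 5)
Your proposal is correct and takes essentially the same approach as the paper: both arguments are inductions over the dyadic tree whose engine is the finite-dimensional mixed-norm duality $(\ell^q(\ell^p))^{*}=\ell^{q'}(\ell^{p'})$ applied at a single node, followed by the same truncation/density step identifying $g_I=l(e_I)$. The only differences are organizational: the paper proves the two inequalities \eqref{dyadicdual} and \eqref{upperbound} by separate inductions (the latter by explicitly constructing the optimal local coefficients $f'_I$, $f'_{I_r}$, $f'_{I_l}$), whereas you fold both into one exact-equality induction via sup-decoupling and homogeneity, and you are somewhat more explicit than the paper about the final limiting step showing the pairing converges and represents $l$ on all of $X_0^{p,q}$.
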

\begin{proof}
The first step is to show that
\begin{equation}\label{dyadicdual}
	|\sum_{I \in \mathcal{D}} f_I g_I| \le \|f\|_{X^{p,q}} \|g\|_{X^{p',q'}}, \quad f \in X^{p,q}, \quad g \in X^{p',q'}, \quad 1 \le p, q \le \infty.
\end{equation}
Now, it is enough to prove the above inequality for \(f\) and \(g\) with finite support and non-negative coefficients. Therefore, we may appeal to induction again, and assume that the above holds for any two sequences supported on \(\mathcal{D}_{i-1}\), and \(f\) and \(g\) are now supported on \(\mathcal{D}_i\). We have
\[
\sum_{I \in \mathcal{D}_i} f_I g_I = \sum_{I \in \mathcal{D}_{i-2}} f_I g_I + \sum_{I \in \mathcal{G}_{i-1}} f_I g_I + f_{I_r} g_{I_r} + f_{I_l} g_{I_l},
\]
and then two applications of Hölder's inequality give us
\[
\sum_{I \in \mathcal{G}_{i-1}} f_I g_I + f_{I_r} g_{I_r} + f_{I_l} g_{I_l} \le \sum_{I \in \mathcal{G}_{i-1}} f_I^{(i-1)} g_I^{(i-1)},
\]
which, after plugging in the previous identity, implies
\[
\sum_{I \in \mathcal{D}_i} f_I g_I \le \sum_{I \in \mathcal{D}_{i-1}} f_I^{(i-1)} g_I^{(i-1)}.
\]
Therefore, from the induction assumption and the fact that
\[
\|f\|_{X^{p,q}} = \|f^{(i-1)}\|_{X^{p,q}}, \quad \|g\|_{X^{p',q'}} = \|g^{(i-1)}\|_{X^{p',q'}},
\]
we obtain
\[
\sum_{I \in \mathcal{D}_i} f_I g_I \le \|f\|_{X^{p,q}} \|g\|_{X^{p',q'}},
\]
which is the desired inequality. The inequality \eqref{dyadicdual} shows that any sequence \(g \in X^{p',q'}\) defines a bounded linear functional \(l_g\), with the natural pairing
\[
l_g(f) := \sum_{I \in \mathcal{D}} f_I g_I,
\]
on \(X^{p,q}\), whose operator norm is no more than \(\|g\|_{X^{p',q'}}\). As the next step, we show that for every sequence \(g\) we have
\begin{equation}\label{upperbound}
	\|g\|_{X^{p',q'}} \le \sup_{\|f\|_{X^{p,q}} \le 1} \big|\sum_{I \in \mathcal{D}} f_I g_I\big|, \quad 1 \le p, q \le \infty.
\end{equation}
To do this, recall \eqref{normofg} and note that it is enough to prove the above inequality for \(g\) with finite support. Therefore, we may use induction again, and as the induction assumption, we assume that for every \(g\) supported on \(\mathcal{D}_{i-1}\), \eqref{upperbound} holds. Now, we assume \(g\) is supported on \(\mathcal{D}_i\). For \(I \in \mathcal{D}_{i-1}\), we may choose \(f'_I\), \(f'_{I_r}\), and \(f'_{I_l}\) such that
\[
f'_I g_I + f'_{I_r} g_{I_r} + f'_{I_l} g_{I_l} = \left( (g_{I_r}, g_{I_l})_{p'}, g_I \right)_{q'}, \quad \left( (f'_{I_r}, f'_{I_l})_{p}, f'_I \right)_{q} = 1,
\]
simply because the two norms appearing in the above expressions are dual to each other.
 Then, for any \(f\) supported on \(\mathcal{D}_{i-1}\), we have
 \begin{align*}
 	&\sum_{I \in \mathcal{D}_{i-1}} g_I^{i-1} f_I = \sum_{I \in \mathcal{D}_{i-2}} g_I f_I + \sum_{I \in \mathcal{G}_{i-1}} \left( (g_{I_r}, g_{I_l})_{p'}, g_I \right)_{q'} f_I = \\
 	&\sum_{I \in \mathcal{D}_{i-2}} g_I f_I + \sum_{I \in \mathcal{G}_{i-1}} f_I f'_I g_I + f_I f'_{I_r} g_{I_r} + f_I f'_{I_l} g_{I_l},
 \end{align*}
 which implies that
 \[
 \|g\|_{X^{p,q}} = \|g^{i-1}\|_{X^{p',q'}} \le \sup_{\|f\|_{X^{p,q}} \le 1} \big| \sum_{I \in \mathcal{D}_{i-1}} g_I^{i-1} f_I \big| \le \sup_{\|f\|_{X^{p,q}} \le 1} \big| \sum_{I \in \mathcal{D}} f_I g_I \big|,
 \]
 the desired inequality.\\

Now, to complete the proof, note that if \( l \) is a bounded linear functional on \( X_0^{p,q} \), we must have 
\[
l(f) = \sum_{I \in \mathcal{D}} f_I g_I,
\]
for all \( f \in X^{p,q} \) with finite support and a unique sequence of numbers \( g \). Therefore, \eqref{upperbound} shows that
\[
\|g\|_{X^{p',q'}} \le \|l\|, \quad 1 \le p, q \le \infty,
\]
which, together with \eqref{dyadicdual}, implies that
\[
(X_0^{p,q})^* = X^{p',q'}, \quad 1 \le p, q \le \infty.
\]
The proof is now complete.
\end{proof}
The duality relation \eqref{unify} unifies Theorems \ref{duality1} and \ref{preduals} in the dyadic setting, and quite similarly to Theorem \ref{preduals}, we may use Theorem \ref{atomiclemma} and establish an atomic decomposition for the preduals of \( X_0^{\infty,q} \).

\begin{corollary}
	Let \( 1 \le q < \infty \), then every sequence \( f \) in \( X_0^{1,q'} \) can be written as
	\[
	f = \sum_{i \ge 0} \lambda_i f_i, \quad \|f\|_{X^{1,q'}} \simeq \sum_{i \ge 0} \lambda_i, \quad \|f_i\|_{X^{1,q'}} = 1, \quad i \ge 0,
	\]
	such that \( \lambda_i > 0 \) and \( \{f_i\} \) is a sequence whose coefficients are supported on a nested collection of dyadic intervals in \( [0,1) \). Also, for \( q = 1 \), the coefficients of \( f_i \) are only \( +1 \) and \( -1 \).
\end{corollary}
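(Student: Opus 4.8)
The plan is to deduce this from the abstract atomic decomposition in Theorem \ref{atomiclemma}, applied to the Banach space $X = X_0^{1,q'}$. By Theorem \ref{xdual} its dual is $(X_0^{1,q'})^* = X^{\infty,q}$, and by \eqref{pointincomom} the dual norm is the clean branch-supremum
\[
\|g\|_{X^{\infty,q}} = \sup_{x\in[0,1]}\Big(\sum_{I\in\mathcal{D}}|g_I|^q 1_I(x)\Big)^{\frac1q} = \sup_{b}\Big(\sum_{I\in b}|g_I|^q\Big)^{\frac1q},
\]
the supremum running over all branches $b$, i.e. maximal chains of nested dyadic intervals containing a common point. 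So what I need to produce is a bounded, symmetric norming set $V\subset X_0^{1,q'}$ consisting of the advertised atoms, after which Theorem \ref{atomiclemma} delivers the decomposition.

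The key structural observation is that the $X^{1,q'}$ norm collapses on a chain. If $f$ is supported on a single nested collection of intervals, then at every internal node at most one of the two children carries a nonzero coefficient, so the inner $\ell^1$-aggregation $(|f_{I_r}|+|f_{I_l}|)$ in the greedy recursion is trivial; running the recursion from the bottom up therefore leaves only the $\ell^{q'}$-accumulation along the chain, giving $\|f\|_{X^{1,q'}} = \big(\sum_{I}|f_I|^{q'}\big)^{1/q'}$. This identity is what makes branch-supported sequences the right atoms, and it is also where $p=1$ is essential: only for $p'=\infty$ is the dual norm the branch-supremum above, so that testing against single chains can recover the full dual norm.

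With this in hand I would set $V$ to be the collection of all finitely supported sequences $v$ whose support lies in a single branch and with $\|v\|_{\ell^{q'}}\le 1$. Such $v$ lie in $X_0^{1,q'}$, the set $V$ is symmetric and bounded (by $1$) in $X^{1,q'}$ by the collapse identity, and for any $g\in X^{\infty,q}$ the branchwise $\ell^{q'}$–$\ell^q$ duality yields
\[
\sup_{v\in V}\big|\langle v,g\rangle\big| = \sup_{b}\ \sup_{\substack{\mathrm{supp}\,v\subset b\\ \|v\|_{\ell^{q'}}\le1}}\big|\langle v,g\rangle\big| = \sup_{b}\Big(\sum_{I\in b}|g_I|^q\Big)^{\frac1q} = \|g\|_{X^{\infty,q}},
\]
where restricting to finite support is harmless because finite sub-chains exhaust each branch monotonically. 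Thus $V$ norms $(X_0^{1,q'})^*$ with constant $1$, and Theorem \ref{atomiclemma} produces $f=\sum_i\lambda_i v_i$ with $\lambda_i>0$, $v_i\in V$, and $\sum_i\lambda_i\simeq\|f\|_{X^{1,q'}}$. Rescaling each $v_i$ to unit $X^{1,q'}$-norm (and adjusting $\lambda_i$ accordingly) gives atoms $f_i$ supported on nested collections with $\|f_i\|_{X^{1,q'}}=1$.

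For the final assertion about $q=1$, one has $q'=\infty$, and over a finite sub-chain the supremum $\sup_{\|v\|_{\ell^\infty}\le1}\langle v,g\rangle = \sum_{I\in b}|g_I|$ is attained by the $\pm1$-valued choice $v_I=\operatorname{sign}(g_I)$. Hence it suffices to let $V$ consist of finitely supported chains with coefficients in $\{+1,-1\}$; these still norm $X^{\infty,1}$, so the atoms may be chosen with coefficients only $+1$ and $-1$. I expect the main point to get right, rather than a deep obstacle, to be the interaction with the closure $X_0$: for $q=1$ an infinite $\pm1$ chain has $X^{1,\infty}$-norm $1$ yet is \emph{not} approximable by finitely supported sequences, so the atoms must be kept finitely supported — which is precisely what the monotone exhaustion of branches in the norming computation above guarantees, and which must be checked together with the collapse identity being applied uniformly across all of $X^{p,q}$ rather than only on the generations $\mathcal{D}_i$ of \eqref{gidef}.
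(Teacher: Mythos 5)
Your proof is correct and is essentially the paper's own argument: the paper likewise applies Theorem \ref{atomiclemma} to $X = X_0^{1,q'}$, taking $V$ to be the branch-supported sequences normalized in $\ell^{q'}$ (with $\pm 1$ coefficients when $q=1$), and invokes Theorem \ref{xdual} together with \eqref{pointincomom} for the norming condition. Your extra verifications — the collapse of the $X^{1,q'}$ norm on chains, and the observation that for $q=1$ the atoms must be kept finitely supported since infinite $\pm 1$ chains do not lie in $X_0^{1,\infty}$ — are details the paper's terse proof leaves implicit, and the latter point is a genuine precision worth recording.
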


\begin{proof}
	In Theorem \ref{atomiclemma}, let $X= X_0^{1,q'}$, and be  \( V \) be the subset of \( X_0^{1,q'} \) consisting of sequences \( g \) supported on dyadic intervals with a point like \( x \in [0,1] \) in common, and satisfying
	\[
	\sum_{x \in I} |g_I|^{q'} = 1, \quad 1 < q < \infty, \quad g_I = \pm 1, \quad x \in I, \quad q =1.
	\]
	Then, from Theorem \ref{xdual} we have $X^*=X^{\infty, q}$, which after recalling \eqref{pointincomom} implies that \( V \) satisfies the condition of Theorem \ref{atomiclemma}, and the claim follows.
\end{proof}

The above decomposition for \( q = 1 \) can be rephrased as the following: To construct a sequence in \( X_0^{1,\infty} \), choose a sequence of points \( \{x_i\}_{i\ge1} \) in \( [0,1] \) and a sequence of positive numbers \( \{\lambda_i\}_{i\ge1} \) in \( l_1 \), then let
\[
f_I = \sum_{x_i \in I} \epsilon_{i,I} \lambda_i, \quad \epsilon_{i,I} = \pm 1.
\]
This way, we may construct all sequences in \( X_0^{1,\infty} \), and in addition, we may choose \( \{x_i\}_{i\ge1} \) and \( \{\lambda_i\}_{i\ge1} \) such that
\[
\|f\|_{X^{1,\infty}} \simeq \sum_{i \ge 0} \lambda_i.
\]
Also, since for \( 1 \le p < \infty \), and \( f \in X^{p,\infty} \), we have \( f^{p} \in X^{1,\infty} \), we conclude that for such \( f \), we have
\[
f_I = \pm \big| \sum_{x_i \in I} \pm \lambda_i \big|^{\frac{1}{p}}, \quad I \in \mathcal{D}, \quad \|f\|_{X^{p,\infty}} \simeq \big( \sum_{i \ge 0} \lambda_i \big)^{\frac{1}{p}}.
\]
Now, let us briefly discuss the dyadic \( \text{JN}_p \) space on the unit interval and its relation to the space \( X^{p,\infty} \). Recall that for \( 1 < p < \infty \), a locally integrable function \( f \) belongs to \( \text{JN}^d_p[0,1] \) if
\[
\|f\|_{\text{JN}^d_p[0,1]} := \|\{\osc{f}{I}|I|^{\frac{1}{p}}\}_{I \in \mathcal{D}}\|_{X^{p,\infty}} < \infty.
\]
See \cite{MR4546791} for more information on this space. For \( 2 < p < \infty \), replacing \( \osc{f}{I} \) with the \( L^2 \)-mean oscillation
\[
\text{osc}_2(f,I)^2 := \ave{|f - f_I|^2}{I},
\]
results in an equivalent norm, i.e.
\[
\|f\|_{\text{JN}^d_p[0,1]} \simeq \|\{\text{osc}_2(f,I)|I|^{\frac{1}{p}}\}_{I \in \mathcal{D}}\|_{X^{p,\infty}}.
\]
See \cite{MR3806819}. The advantage of using \( L^2 \)-mean oscillations lies in the fact that they have a simple expression in terms of Haar coefficients. Recall that the $L^\infty$-normalized Haar basis of \( L^2[0,1] \) consists of functions
\[
h_I = 1_{I_r} - 1_{I_l}, \quad I \in \mathcal{D} \setminus {[0,1)}, \quad h_{[0,1)} := 1_{[0,1)},
\]
which form an orthogonal basis for \( L^2[0,1] \). Thus, any function \( f \in L^2[0,1] \) can be uniquely written as
\[
f = \sum_{I \in \mathcal{D}} f_I h_I, \quad f_I := |I|^{-1}\left\langle f, h_I \right\rangle, \quad I \in \mathcal{D}.
\]
Therefore, for any \( I \in \mathcal{D} \), we have
\[
\text{osc}_2(f,I)^2 = \frac{1}{|I|}\sum_{J \subseteq I} f_J^2|J|,
\]
which implies that
\begin{equation}\label{haardef}
	f_I^2|I| = |I|\text{osc}_2(f,I)^2 - |I_r|\text{osc}_2(f,I_r)^2 - |I_l|\text{osc}_2(f,I_l)^2\ge 0,
\end{equation}
from which we conclude that
\begin{equation}\label{theaboveintw}
	g_I \ge  2^{\frac{1}{p}-\frac{1}{2}}(g_{I_r}^2 + g_{I_l}^2)^{\frac{1}{2}}, \quad g_I = \text{osc}_2(f,I)|I|^{\frac{1}{p}}, \quad I \in \mathcal{D}.
\end{equation}
The above inequality, together with \eqref{haardef}, shows that for $2<p<\infty$, there exists a correspondence between functions \( f \in \text{JN}^d_p[0,1] \) and sequences \( g \in X^{p,\infty} \) satisfying the property on the left-hand side of \eqref{theaboveintw}. For if we have such \( g \), we can use \eqref{haardef} to easily determine the Haar coefficients of \( f \) up to a choice of signs. However, it is unfortunate that we were unable to characterize this subspace of \( X^{p,\infty} \).\\

Next, as an application of Theorem \ref{xdual}, we characterize some bounded Haar multipliers from \( SL^{\infty}[0,1] \), the space of functions with a bounded square function, into \( L^2[0,1] \). Recall that
\[
\|f\|_{SL^{\infty}[0,1]} := \|S(f)\|_{L^{\infty}[0,1]}, \quad S(f)(x) := \big( \sum_{I\in\mathcal{D}} f_I^2 1_I(x) \big)^{\frac{1}{2}}.
\]
See \cite{MR2157745} for more on this space. Now, let \( T_a \) be a Haar multiplier on \( L^2[0,1] \), i.e.,
\begin{equation}\label{haarmultiplier}
	T_a(f)(x) = \sum_{I \in \mathcal{D}} \frac{a_I}{\sqrt{|I|}} f_I h_I(x), \quad a = \{a_I\}_{I \in \mathcal{D}},
\end{equation}
where we assume that only finitely many coefficients of \( a \) are non-zero. Then, we have
\[
\|T_a\|^2_{SL^{\infty} \to L^2[0,1]}=\sup_{\|f\|_{SL^{\infty}[0,1]}=1}\|T(f)\|_{L^2[0,1]}^2 =\sup_{\|f\|_{SL^{\infty}[0,1]}=1} \sum_{I \in \mathcal{D}} a_I^2 f_I^2,
\]
which together with Theorem \ref{xdual} and the fact that
\begin{equation}\label{slintysupmax}
	\|f\|_{SL^{\infty}[0,1]}= \|\{f_I\}_{I \in \mathcal{D}}\|_{X^{\infty,2}}=\|\{f^2_I\}_{I \in \mathcal{D}}\|^{\frac{1}{2}}_{X^{\infty,1}},
\end{equation}
 implies 
\[
\|T_a\|_{SL^{\infty} \to L^2[0,1]}= \|\{a_I^2\}_{I \in \mathcal{D}}\|_{X^{1,\infty}}^{\frac{1}{2}} = \|a\|_{X^{2,\infty}}.
\]

\begin{proposition}
Let \( a = \{a_I\}_{I \in \mathcal{D}} \) be a sequence with finite support. Then, for the Haar multiplier \( T_a \) defined as in \eqref{haarmultiplier}, we have
\[
\|T_a\|_{SL^{\infty} \to L^2[0,1]}= \|a\|_{X^{2,\infty}}.
\]
\end{proposition}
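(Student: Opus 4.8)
The plan is to peel the operator norm down to the dyadic duality $(X_0^{\infty,1})^* = X^{1,\infty}$ supplied by Theorem \ref{xdual}, assembling the chain of identities indicated just before the statement.

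First I would make the action of $T_a$ on $L^2$ explicit. The $I$-th Haar coefficient of $T_a f$ is $a_I|I|^{-1/2}f_I$, and since the system $\{h_I\}$ is orthogonal with $\|h_I\|_{L^2}^2 = |I|$, the Pythagorean identity gives $\|T_a f\|_{L^2}^2 = \sum_{I} a_I^2 f_I^2$ (only finitely many terms survive because $a$ has finite support). Consequently $\|T_a\|_{SL^\infty\to L^2}^2 = \sup_{\|f\|_{SL^\infty}=1} \sum_I a_I^2 f_I^2$.

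Second, I would linearize the problem. By \eqref{slintysupmax} the constraint $\|f\|_{SL^\infty}\le 1$ is exactly $\|\{f_I^2\}\|_{X^{\infty,1}}\le 1$, so the substitution $b_I := f_I^2$ rewrites the supremum as $\sup \sum_I a_I^2 b_I$ over non-negative, finitely supported sequences $b$ in the unit ball of $X^{\infty,1}$; conversely every such $b$ comes from an admissible $f$ via $f_I := \sqrt{b_I}$, so the two suprema coincide. The one step requiring care is removing the sign restriction on $b$: since the $X^{\infty,1}$-norm depends only on $(|b_I|)$ and $a_I^2\ge 0$, replacing $b$ by $(|b_I|)$ keeps $b$ in the unit ball while not decreasing $\sum_I a_I^2 b_I$, so the constrained supremum equals the unconstrained one.

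At this point the genuine content enters: invoking Theorem \ref{xdual} with $(p,q)=(\infty,1)$ — concretely \eqref{upperbound} for the lower bound and \eqref{dyadicdual} for the upper bound — identifies $\sup_{\|b\|_{X^{\infty,1}}\le 1}\big|\sum_I a_I^2 b_I\big| = \|\{a_I^2\}\|_{X^{1,\infty}}$. This is the main obstacle in the sense that it is the only non-bookkeeping ingredient; the finiteness of the support of $a$ conveniently sidesteps any issue about closures or the gap between $X^{\infty,1}$ and $X_0^{\infty,1}$, since all pairings involve only finitely supported sequences. Finally, reading the definitions directly gives $\|\{a_I^2\}\|_{X^{1,\infty}} = \sup_{\mathcal{C}}\sum_{I\in\mathcal{C}} a_I^2 = \|a\|_{X^{2,\infty}}^2$, and taking square roots yields $\|T_a\|_{SL^\infty\to L^2} = \|a\|_{X^{2,\infty}}$.
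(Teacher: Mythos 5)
Your proof is correct and takes essentially the same route as the paper: compute $\|T_a f\|_{L^2}^2=\sum_I a_I^2 f_I^2$ by orthogonality of the Haar system, convert the constraint $\|f\|_{SL^\infty}\le 1$ into $\|\{f_I^2\}\|_{X^{\infty,1}}\le 1$ via \eqref{slintysupmax}, apply the duality $(X_0^{\infty,1})^*=X^{1,\infty}$ of Theorem \ref{xdual}, and read off $\|\{a_I^2\}\|_{X^{1,\infty}}^{1/2}=\|a\|_{X^{2,\infty}}$. The paper presents exactly this chain (more tersely); your explicit linearization $b_I=f_I^2$ and the sign/closure remarks simply fill in details it leaves implicit.
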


Finally, a well-known property of functions in \( SL^{\infty}[0,1] \) is that for an absolute constant \( c \), the following inequality holds:
\[
\int_{[0,1]} \exp \left( \frac{c |f(x)|^2}{\|f\|_{SL^{\infty}}^2} \right) \, dx \lesssim 1, \quad \int_{[0,1]} f = 0,
\]
which is equivalent to
\[
\|f\|_{e^{L^2}[0,1]} \lesssim \|f\|_{SL^{\infty}}, \quad f_{[0,1)} = 0,
\]
where \( e^{L^2} \) is used to denote the Orlicz norm \cite{MR2157745}. 
The above inequality is nothing but the boundedness of the map
\[
\mathcal{S} : \left\{ g \in X_0^{\infty,2} : g_{[0,1)} = 0 \right\} \to e^{L^2}[0,1], \quad \mathcal{S} \big( \{g_I\}_{I \in \mathcal{D}} \big) = \sum_{I \in \mathcal{D}} g_I h_I.
\]
Therefore, after taking the adjoint, applying Theorem~\ref{xdual}, and using the duality fact that
\[
L(1 + \log^+)^{\frac{1}{2}} L[0,1]^* = e^{L^2}[0,1],
\]
we obtain the following result:

\begin{proposition}
For any \( f \) with \( \int_{[0,1)} f = 0 \), and finite Haar expansion we have
\[
\|\{f_I\}_{I \in \mathcal{D}}\|_{X^{1,2}} \lesssim \|f\|_{L(1 + \log^+)^{\frac{1}{2}} L[0,1]}.
\]
\end{proposition}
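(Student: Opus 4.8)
The plan is to recognize the asserted inequality as the dual (adjoint) form of the exponential--square integrability that was just recorded as the boundedness of $\mathcal{S}$, and to obtain it by combining three facts already in hand: the boundedness of $\mathcal{S}\colon A\to e^{L^2}[0,1]$ on $A:=\{g\in X_0^{\infty,2}: g_{[0,1)}=0\}$, the sequence duality $(X_0^{\infty,2})^*=X^{1,2}$ furnished by Theorem~\ref{xdual} (with $p=\infty$, $q=2$), and the Orlicz duality $L(1+\log^+)^{\frac12}L[0,1]^*=e^{L^2}[0,1]$. Rather than manipulate $(e^{L^2})^*$ directly, I would run the duality through the bilinear pairing defined by $\mathcal{S}$.

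So fix $f$ with finite Haar expansion and $\int_{[0,1)}f=0$; then the sequence of its Haar data is finitely supported and, crucially, $\langle f,h_{[0,1)}\rangle=\int_{[0,1)}f=0$. For a finitely supported $g=\{g_I\}$ with $g_{[0,1)}=0$, orthogonality of the Haar system gives the identity
\[
\int_{[0,1]} f\,\mathcal{S}(g)=\int_{[0,1]} f\sum_{I}g_I h_I=\sum_{I} g_I\,\langle f,h_I\rangle ,
\]
and the generalized Hölder inequality for the complementary Orlicz pair, together with the boundedness of $\mathcal{S}$, yields
\[
\Big|\sum_{I} g_I\,\langle f,h_I\rangle\Big|=\Big|\int_{[0,1]} f\,\mathcal{S}(g)\Big|\le \|f\|_{L(1+\log^+)^{\frac12}L}\,\|\mathcal{S}(g)\|_{e^{L^2}}\lesssim \|f\|_{L(1+\log^+)^{\frac12}L}\,\|g\|_{X^{\infty,2}} .
\]
Taking the supremum over $g$ in the unit ball of $X^{\infty,2}$ and invoking Theorem~\ref{xdual} converts the left-hand side into the $X^{1,2}$-norm of the pairing sequence. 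The only role of the hypothesis $\int_{[0,1)}f=0$ is to let me restrict this supremum to $g\in A$ at no cost: since the corresponding coefficient of the pairing sequence vanishes, setting $g_{[0,1)}=0$ neither changes the sum nor increases $\|g\|_{X^{\infty,2}}$, so the restriction to the domain of $\mathcal{S}$ is harmless.

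The step that needs the most care, and which I expect to be the genuine obstacle, is the bookkeeping of the Haar normalization when aligning the two dualities. The identity above pairs $f$ against $\mathcal{S}(g)$ through the $L^2$ inner product, so the quantity actually controlled is the $X^{1,2}$-norm of the sequence $\{\langle f,h_I\rangle\}_I$, and matching this with the unweighted pairing $\sum_I\phi_I g_I$ underlying Theorem~\ref{xdual} forces one to keep track of the factors $|I|=\|h_I\|_{L^2}^2$ throughout. A one-term sanity check with $f=h_{I_0}$ pins down which normalization of the coefficients $f_I$ the statement must use, and once the $L^2$-integration pairing and the sequence pairing are correctly identified through $\mathcal{S}$, the remaining content is exactly the Hölder estimate and the supremum above.
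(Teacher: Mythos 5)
Your proposal is correct and is essentially the paper's own argument: the paper likewise deduces the proposition by taking the adjoint of $\mathcal{S}$ on $\{g\in X_0^{\infty,2}:g_{[0,1)}=0\}$ and combining Theorem~\ref{xdual} (with $p=\infty$, $q=2$) with the Orlicz duality $L(1+\log^+)^{\frac{1}{2}}L[0,1]^*=e^{L^2}[0,1]$; your version just makes the adjoint explicit through the bilinear pairing, and your observation that setting $g_{[0,1)}=0$ costs nothing (by monotonicity of the $X^{\infty,2}$-norm, and because the corresponding coefficient of the pairing sequence is $\int_{[0,1)}f=0$) is exactly the right way to justify passing to that codimension-one subspace. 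Your normalization worry is also well founded: the argument controls $\|\{\langle f,h_I\rangle\}_I\|_{X^{1,2}}=\|\{|I|\,f_I\}_I\|_{X^{1,2}}$, so the coefficients in the proposition must be read as $\langle f,h_I\rangle$ rather than as the $L^\infty$-normalized expansion coefficients $f_I=|I|^{-1}\langle f,h_I\rangle$ defined earlier in the section, and your one-term check $f=h_{I_0}$ (where $\|\{f_I\}\|_{X^{1,2}}=1$ while $\|h_{I_0}\|_{L(1+\log^+)^{\frac{1}{2}}L}\simeq|I_0|(1+\log\tfrac{1}{|I_0|})^{\frac{1}{2}}\to 0$) indeed rules out the latter reading.
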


\section{acknowledgments}
I am indebted to Professor Galia Dafni for posing an inspiring question that led to the material in this paper and for hours of valuable discussions. I would also like to thank Professor Riikka Korte for an interesting discussion on the subject.


\end{document}